\documentclass[10pt,twoside]{article}

\usepackage{enumerate,amsthm,amsmath,amssymb,graphicx,psfrag,xspace}
\usepackage{xspace}
\usepackage{comment}
\usepackage[%
   pdfpagemode=UseNone,
   bookmarks=true,
   colorlinks,
   linkcolor=blue,
   anchorcolor=blue,
   citecolor=blue,
   filecolor=blue,
   pagecolor=blue,
   urlcolor=blue
   ]{hyperref}

\setlength{\oddsidemargin}{19mm}
\setlength{\evensidemargin}{19mm}

\def\eqref#1{(\ref{#1})}

\newcommand{\note}[1]%
{\noindent\centerline{\fbox{\parbox{.9\textwidth}{\textbf{#1}}}}}

\renewcommand{\P}{\mathcal{P}}
\newcommand{\PP}{\frak{P}}
\newcommand{\T}{\ensuremath{\mathcal T}}
\newcommand{\TT}{\ensuremath{\mathbb T}}
\newcommand{\IT}{I_{\T}}

\newcommand{\F}{\ensuremath{\mathcal F}}
\newcommand{\M}{\ensuremath{\mathcal M}}
\newcommand{\C}{\ensuremath{\mathcal C}}
\newcommand{\CC}{\ensuremath{\mathbb C}}

\newcommand{\norma}[1]{\left\| {#1} \right\|}
\newcommand{\seminorma}[1]{\left | {#1} \right |}

\newcommand{\R}{\mathbb R}
\newcommand{\V}{\mathbb V}
\newcommand{\NN}{\mathbb N}

\DeclareMathOperator{\dist}{dist}

\newcommand{\N}{\ensuremath{{\mathcal N}}}
\newcommand{\A}{\ensuremath{\mathbb A}}
\newcommand{\Asp}{{\ensuremath{\mathbb A_{s}^{p}}}}

\newcommand{\grad}{\nabla}

\DeclareMathOperator{\diam}{diam}



\newtheorem{theorem}{Theorem}[section]

\newtheorem{lemma}[theorem]{Lemma}

\newtheoremstyle{examplestyle}
  {6pt}
  {6pt}
  {}
  {}
  {\bfseries}
  {.}
  {1em}
  {}
\theoremstyle{examplestyle}
             
             \newtheorem{remark}[theorem]{Remark}
             \newtheorem{notation}[theorem]{Notation}
\newtheoremstyle{algorithmstyle}
  {6pt}
  {6pt}
  {\ttfamily}
  {}
  {\bfseries}
  {.}
  {1em}
  {}
\theoremstyle{algorithmstyle}
             
%



\title{Convergence rates for adaptive finite elements}
\author{Fernando D.\ Gaspoz \and Pedro Morin\\[10pt]
Consejo Nacional de Investigaciones Cient{\'\i}ficas y T\'ecnicas\\
and Universidad Nacional del Litoral\\
IMAL - G\"uemes 3450 - S3000GLN - Argentina\\
\texttt{fgaspoz}, \texttt{pmorin@santafe-conicet.gov.ar}
}

\begin{document}

\maketitle
\begin{abstract}
 In this article we prove that it is possible to construct, using newest-vertex bisection, meshes that equidistribute the error in $H^1$-norm, whenever the function to approximate can be decomposed as a sum of a regular part plus a singular part with singularities around a finite number of points. This decomposition is usual in regularity results of Partial Differential Equations (PDE). As a consequence, the meshes turn out to be quasi-optimal, and convergence rates for adaptive finite element methods (AFEM) using Lagrange finite elements of any polynomial degree are obtained.
\end{abstract}

\section{Introduction}

Adaptive procedures for the numerical solution of partial differential equations (PDE) started in the late 70's and are now standard tools in science and engineering. The ultimate purpose of adaptivity is to reduce the computational cost through the automatic construction of a sequence of meshes that would eventually equidistribute the approximation errors, leading to (quasi-)optimal meshes.  Adaptive methods for stationary problems usually consist of the loop
\begin{equation}
\label{SEMR}
 \text{\textsf{SOLVE} $\to$ \textsf{ESTIMATE} $\to$ \textsf{MARK} $\to$ \textsf{REFINE}}.
\end{equation}

Experience strongly suggests that, starting from a coarse mesh, such an iteration
converges within any prescribed error tolerance in a finite number of steps, and it does so in an optimal manner, provided the a posteriori error estimators are reliable and efficient. What is observed in fact, is that for a large class of problems and data, the solutions $u_\T$ and meshes $\T$ obtained with adaptive methods of the form~\eqref{SEMR} satisfy
\begin{equation}\label{optimal-decay}
 \| u - u_\T \|_{H^1} \le C (\#\T)^{-p/d},
\end{equation}
where $u$ denotes the exact solution, $p$ the polynomial degree of the finite element space over the mesh $\T$, and $d$ the dimension of the underlying space. This is the same error bound that is obtained with uniformly refined meshes for smooth (regular) solutions $u \in H^{p+1}$, by an application of classical interpolation estimates~\cite{Ciarlet}. The decay rate dictated by~\eqref{optimal-decay} ---which is also observed in practice for the so-called singular solutions belonging to $H^s(\Omega)$ for $s < 2$--- is usually called \emph{optimal error decay}. The precise goal of this paper is to show a broader family of functions for which this so-called optimal decay can be obtained when using adaptive methods. We will prove that this decay holds for functions that can be decomposed as a sum of a regular part plus singular terms, as described in classical regularity results for PDE~\cite{Gris1,Gris2,Petzoldt,Dauge}.

The first steps towards understanding the optimality of AFEM consisted of studying their convergence. An analysis of \eqref{SEMR} for linear, elliptic, and symmetric problems in 1d is presented in~\cite{BabVog}. The first multidimensional result is given in~\cite{Dor}, where it is proved that, after a pre-adaptation to data, \eqref{SEMR} reduces the error below any prescribed tolerance. Proper convergence without conditions on the initial grid is proved in~\cite{MNS}, requiring the so-called \emph{interior node property} and an additional marking step driven by \emph{data oscillation}.  The latter work was generalized in various directions.
Lately, convergence of adaptive methods with marking strategies other than D\"orfler's, for a large class of linear problems with different a posteriori error estimators, and without requiring the marking due to oscillation or the interior node property, was proved in~\cite{MSV}. The result only leads to asymptotic convergence without an error reduction in every step, which seems to be essential to prove optimality though~(see \cite{Stev1,CKNS}).

Regarding complexity, an important result for an algorithm which is very similar to~\eqref{SEMR}, is proved in~\cite{Stev1}. The proof relies on techniques first developed in~\cite{BDD} and new ideas. This result was later improved in several aspects in~\cite{CKNS}: the artificial assumptions of interior node and marking due to data oscillation were removed, and the result applies to more general elliptic equations.

When considering adaptive methods the notion of complexity differs from the previous one which was based on a uniform element size $h$. It is now defined in terms of the number of elements (or degrees of freedom) necessary to achieve a certain tolerance.

In order to be more specific at this point we need to introduce some notation.
Let us assume that we have a function $u \in H^1(\Omega)$, where $\Omega$ is a polygonal domain in $\R^2$ (polyhedral in $\R^3$), and $H^1(\Omega)$ denotes the Sobolev space of square integrable functions with square integrable weak derivatives of first order.

We consider an initial triangulation $\T_0$ of the domain $\Omega$ into simplices, and we let the \emph{admissible triangulations} be those obtained from $\T_0$ with newest-vertex bisection, either the iterative~\cite{Baensch} or the recursive~\cite{Kossaczky} version, without hanging nodes. For each admissible triangulation $\T$ we consider the Lagrange finite element space
\[
 \V_\T = \left\{ v \in H^1(\Omega) : v_{|T} \in \P^p, \forall T \in \T \right\},
\]
where, for $p \in \NN$, $\P^p$ denotes the space of polynomials of degree $\le p$. 
The best approximation error with complexity $N$, for $N\in\NN$, is defined as follows:
\[
 \sigma_N^p(u) = \min_{\T \in \TT_N} \inf_{v \in \V_\T} \| u - v \|_{H^1(\Omega)},
\]
where $\TT_N := \{ \T \text{ admissible} : (\#\T-\#\T_0) \le N \}$ that is, the minimum over $\T$ is taken over all admissible triangulations obtained with at most $N$ bisections. 
We now define, for $s > 0$ the approximation classes
\[
 \Asp = \left\{ v \in H^1(\Omega) : \exists C \text{ such that } \sigma_N^p(v) \le C N^{-s}, \forall N \in \NN\right\},
\]
or, equivalently,
\[
 \Asp = \left\{ v \in H^1(\Omega) : |v|_{\Asp} < \infty \right\}
\qquad\text{with}\qquad 
|v|_\Asp := \sup_{N\in\NN} \sigma_N^p(v) N^s .
\]

The first complexity results for adaptive finite element methods (AFEM) are presented in~\cite{BDD}, for an algorithm that needs coarsening, which seems not to be necessary, at least for symmetric elliptic problems. This, and the aforementioned papers on optimality of AFEM~\cite{Stev1,CKNS} study adaptive algorithms for approximating the solution $u$ to an elliptic partial differential equation. Essentially, the following fundamental result is proved: the adaptive algorithms generate a sequence $\{(\T_k, u_k)\}_{k\in\NN}$ of triangulations and finite element approximations $u_k \in \V_{\T_k}^1$ that satisfy the following:
\[
 \text{If}\quad u \in \A_s^1 \qquad\text{then}\qquad
 \| u - u_k \|_{H^1(\Omega)} \le \tilde C  (\# \T_k )^{-s}, \qquad \forall k \in \NN.
\]
That is, the sequence of triangulations and approximate solutions have a complexity with the same decay rate as the optimal ones.
The interesting aspect of those results is the fact that such a (quasi-)optimal approximation is obtained through a standard adaptive loop for the elliptic problem, without a priori knowledge of the exact solution, and with a number of operations proportional to the cardinality of the meshes. Notice that a simple minded approach to compute $\sigma_N^p(u)$ with precise knowledge of $u$ could lead to exponential work in terms of $N$.

The question ---already raised in~\cite{CKNS}--- that is still unanswered is what rate $s$ is to be expected in different situations. From the results just described it is clear that AFEM do a quasi-optimal job among all possible adaptive meshes. What we present in this article, is quantitative information about the convergence rate of AFEM.
In order to do so, we relate the membership of a function to an approximation class $\Asp$ with its regularity, proving rigorously, through the construction of specific meshes, that certain class of functions is contained in $\Asp$.

In~\cite{BDDP} an almost characterization of these classes is obtained, for the case $p=1$ in terms of Besov regularity for Lipschitz polygonal domains; the proof is based on an adaptive tree approximation algorithm. To illustrate the applicability of this result we just mention ---without giving too much detail--- that the Besov space $B_\tau^2(L_\tau(\Omega))$ is contained in $\A_{1/2}^1$ for all $\tau > 1$~\cite[Theorem 5.1]{BDDP}. 

The regularity of solutions to Poisson's problem on Lipschitz domains, in terms of Besov regularity is studied in~\cite{DD}. It is proved that for Poisson's equation $-\Delta u = f$ in a Lipschitz polygonal domain $\Omega \subset \R^2$, with homogeneous Dirichlet boundary values, $u \in B_\tau^2(L_\tau(\Omega))$ if $f \in H^1(\Omega)$. 

Combining these two results we obtain that $u \in \A_{1/2}^1$ if $f \in H^1$, but a stronger result holds. Using Grisvard's Sobolev regularity results~\cite{Gris1}, we have that only assuming $f \in L^2(\Omega)$, $u \in W_p^2(\Omega)$, that is, all derivatives of order up to two are in $L^p(\Omega)$, for all $1\le p < 4/3$. This, in turn implies that for all $1 < \tau < 4/3$, $u$ belongs to the Besov space $B_\tau^2(L_\tau (\Omega))$, and applying the result~\cite{BDDP} this implies $u \in \A_{1/2}^1$ under the sole assumption of $f\in L^2(\Omega)$.


The spirit of the results that we present in this article is a combination of~\cite{BDDP} and~\cite{DD}. However, our approach will not hinge upon regularity in Besov terms but rather upon a decomposition of the functions as a sum of a regular part plus singular terms, as stems from the classical regularity results for PDE like those stated in~\cite{Gris1,Gris2,Kellogg75,Kellogg92,Petzoldt,Dauge}. We obtain results for polygonal domains which are not necessarily Lipschitz (including slit domains) and we generalize to any polynomial degree $p\ge 1$; the proof is elementary, and does not make use of sophisticated theory of $L^q$ spaces for $q < 1$, as seems necessary in the approach of~\cite{BDDP}. Moreover, our result is directly applicable in some cases where the Besov regularity of the solutions to the PDE is not available, but instead, a descomposition into a regular plus a singular part is known to hold.

In~\cite{Gris1,Gris2} one can find some conditions on the element sizes relative to the distance to the points where the singularities are located, in order to obtain an error of order $N^{-1/2}$ when using linear elements in 2d. The difference between our result and those, is that we present an algorithm for \emph{constructing those meses using bisection}, and thus show that those meshes are attainable by an adaptive algorithm. Moreover, in view of the results in~\cite{Stev1,CKNS}, a consequence of our result is that the standard adaptive algorithms proposed there generate a sequence of meshes and discrete solutions $\left\{ \T_k, u_k \right\}_k$ satisfying $\| u - u_k \|_{H^1(\Omega)} \le C \left(\#\T_k\right)^{-p/d}$. A quantitative answer regarding convergence rates of adaptive finite element methods is thus obtained, for Lagrange finite elements of any polynomial degree $p \ge 1$.

The rest of the article is organized as follows. In section~\ref{S:main-result} we state the main result and present some applications to solutions of elliptic PDE in section~\ref{S:applications}. In section~\ref{S:construction} we propose an algorithm for constructing the desired mesh and prove some of its properties. We conclude the proof of the main result by bounding the error in section~\ref{S:error}.

\section{Main Result}\label{S:main-result}

 From now on, for any admissible triangulation $\T$ of the domain $\Omega$, we let $\V_\T$ denote the finite element space of continuous piecewise polynomials of degree $\le p$, where $p$ is a fixed positive integer. The following is the main result of this article, which states that a large family of functions, as those obtained when solving elliptic and other PDE, belong to $\A_{p/2}^p$. 

\begin{theorem}\label{T:main-result}
Let $ \Omega\subset\R^d $ be a polygonal ($d=2$) or polyhedral ($d=3$) domain, 
not necessarily Lipschitz, let $\T_0 $ be an initial triangulation of $\Omega $ and suppose that 
\begin{equation}\label{split}
  u = \sum_{i=0}^{N} u_i
\end{equation}
where:
\begin{itemize}
 \item  
$u_0\in H^1(\Omega)$, with $u_0\vert_{T} \in H^{p+1}(T)$, for all $T \in \T_0$;
 \item  for $i=1,2,\dots,N$, $u_i$ can be expressed in polar coordinates around $x_i$ as 
\[
u_i = c_i \, \big(\ln(r_i)\big)^{k_i} \, r_i^{\gamma_i} \, 
g_i( \overrightarrow{\theta_i} ) \, \chi_i ,
\]
where:
\begin{enumerate}
\item  $c_i$ are constants and $k_i$ are nonnegative integers.
\item  $\{x_i\}_{i=1}^{N} =: \mathcal{N}$ is a set of points in $\overline{\Omega}$, that are also vertices of $\T_0$;
\item $r_i$ denotes the distance to $x_i$, and: 
\begin{itemize}
 \item $\overrightarrow{\theta_i} = \theta_i \in [0,2\pi)$  is the angle coordinate of $x$ with respect to $x_i$ and a half line starting at $x_i$, when $d=2$;
 \item $\overrightarrow{\theta_i} = (\theta_i , \phi_i ) \in
   [0,2\pi)\times[0,\pi]$, where $\phi_i$ is the angle coordinate of
   $x$ with respect to $x_i$ and a half line $R$ starting at $x_i$,
   and letting $P$ denote the plane orthogonal to $R$ that contains
   $x_i$, $\theta_i$ is the angle coordinate of the projection of $x$
   on the plane a half line $S$ starting at $x_i$ contained into $P$,
   when $d=3$.
\end{itemize}
\item $\gamma_i$ are positive constants;
\item the functions $g_i$ satisfy the following assumptions depending on the dimension $d$:
\begin{itemize}
\item $g_i \in W_\infty^{1}(0,2\pi)$, satisfies the periodicity
  condition $g_i(0) = g_i(2\pi)$ and is piecewise $W_\infty^{p+1}$ in
  the following sense: there exists a partition $\PP_i$ of $[0,2\pi]$ into segments such that 
$g_i|_S   \in W^{p+1}_\infty(S) $ for all $S\in\PP_i$,
when  $d=2$;
\item $g_i \in W_\infty^{1}((0,2\pi)\times(0,\pi))$, satisfies the
  periodicity conditions $g_i(0,\phi_i) = g_i(2\pi,\phi_i)$,
  $0<\phi_i<\pi$, and $g_i(0,0)=g_i(\theta_i,0)$,
  $g_i(0,2\pi)=g_i(\theta_i,2\pi)$, $0<\theta_i<2\pi$, and is
  piecewise $W_\infty^{p+1}$ in the following sense: there exists a
  partition $\PP_i$ of $(0,2\pi)\times(0,\pi)$ into triangles such that $g_i|_S
  \in W^{p+1}_\infty(S) $ for all $S\in\PP_i$, when $d=3$;
\end{itemize}
\item $\chi_i$ are $C^\infty(\overline\Omega)$ cutoff functions;
\item the jumps of $\grad u_i$ (if any) are aligned with the edges of the initial mesh $\T_0$.
\end{enumerate}
\end{itemize}
 Then, for any given tolerance $ \varepsilon > 0$, there
exists a conforming triangulation $\T$, obtained by newest-vertex bisection, starting from $\T_0$ such that:
\begin{equation}\label{statement}
\inf_{u_{\T} \in \V_{\T}} \norma{u-u_{\T}}_{1, \Omega} \leq
\varepsilon \qquad
\text{and}\qquad \# \T  - \# \T_0 \leq \mathbf{C}_{u,\T_0} \, \frac{1}{\varepsilon^{d/p}},
\end{equation}
where $\mathbf{C}_{u,\T_0}$ depends on all the parameters that enter the definition of the singular part $ \sum_{i=1}^N u_i $, on  $\T_0$, and on $u$ through the broken seminorm 
$|u_0|_{H^{p+1}_{\T_0}(\Omega)} := \left(\sum_{T\in\T_0} \|D^{p+1} u_0\|^2_{L^2(T)}\right)^{1/2}$, but not on $\varepsilon$. 
Therefore $u \in \A_{p/d}^p$.
\end{theorem}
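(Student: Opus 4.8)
The plan is to reduce the global approximation problem to a local one near each singular point, handle the smooth remainder by classical interpolation, and build the mesh by a graded newest-vertex-bisection refinement whose grading is dictated by the exponent $\gamma_i$ (and the logarithmic factor). First I would treat the regular part $u_0$: since $u_0|_T\in H^{p+1}(T)$ for every $T\in\T_0$ and the subsequent refinements only subdivide these elements, a quasi-uniform refinement of $\T_0$ of size $h$ gives, by the Bramble--Hilbert/Ciarlet interpolation estimate, an $H^1$-error $\lesssim h^p\,|u_0|_{H^{p+1}_{\T_0}(\Omega)}$, at the cost of $O(h^{-d})$ elements; choosing $h\sim \varepsilon^{1/p}$ already produces the claimed complexity bound for this piece. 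The jump-alignment hypothesis guarantees $u_0$ is elementwise smooth on $\T_0$ and stays so under bisection, so the broken seminorm is all that enters.

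Next, for each singular term $u_i = c_i(\ln r_i)^{k_i} r_i^{\gamma_i} g_i(\overrightarrow{\theta_i})\chi_i$, I would work in a neighborhood of $x_i$ (outside the supports of the cutoffs the term is smooth and absorbed into the $u_0$ treatment). The key local estimate is the standard graded-mesh computation: on an element $T$ at distance $\rho$ from $x_i$ with diameter $h_T$, the scaled interpolation error satisfies $\|u_i - I u_i\|_{H^1(T)}^2 \lesssim h_T^{2p}\,|u_i|_{H^{p+1}(T)}^2$ away from $x_i$, and $|u_i|_{H^{p+1}(T)}$ behaves like $\rho^{\gamma_i - p - 1}$ up to logarithmic corrections (the angular factor $g_i$ being piecewise $W_\infty^{p+1}$ contributes a bounded factor once the partition $\PP_i$ is respected, which I arrange by adding the rays/planes of $\PP_i$ to $\T_0$ or noting they are already mesh-aligned). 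To equidistribute, I want each element to contribute $\lesssim \varepsilon^2/(\#\T)$ to the squared error; this forces $h_T \lesssim \rho^{1-(p+1-\gamma_i)/p}\cdot(\text{tol})$, i.e. a mesh graded like $h_T\sim \rho^{\mu_i}$ with $\mu_i = 1 - (p+1-\gamma_i)/p$ near each $x_i$ and $h_T\sim$ const far from all of them, plus an innermost ``core'' simplex at $x_i$ of size $\delta$ whose own contribution $\delta^{2\gamma_i}$ (times logs) is controlled by taking $\delta$ a suitable power of $\varepsilon$. A geometric-series count over the dyadic shells $\rho\sim 2^{-j}$ then yields $\#\T - \#\T_0 \lesssim \varepsilon^{-d/p}$, with the sum converging precisely because $\gamma_i>0$; the constant collects $c_i,k_i,\gamma_i$, the $W_\infty^{p+1}$ norms of the $g_i$ on $\PP_i$, the cutoffs, and $\#\N$.

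The genuinely nontrivial step — and the reason the construction section of the paper exists — is \emph{realizing} this graded mesh by newest-vertex bisection starting from $\T_0$ while keeping conformity (no hanging nodes) and keeping the element count under control: one must show that a bisection-generated conforming mesh can achieve, up to constants, the local size distribution $h_T\sim\dist(T,\N)^{\mu_i}$, and that the conformity-closure overhead (the extra elements forced by propagating refinements to remove hanging nodes) is still $O(\varepsilon^{-d/p})$ rather than something worse. This is where the recursive/iterative newest-vertex bisection machinery and the admissibility of $\T_0$ are used, and I would invoke (or prove in the construction section) the standard fact that for newest-vertex bisection the closure cost is linear in the number of ``essential'' refinements. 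Finally, I would assemble the pieces: choose the quasi-uniform far-field size, the grading exponents $\mu_i$, and the core radii $\delta_i$ all as explicit powers of $\varepsilon$; sum the local error contributions over all elements to get total $H^1$-error $\le\varepsilon$; sum the element counts to get $\#\T-\#\T_0\le \mathbf{C}_{u,\T_0}\,\varepsilon^{-d/p}$. Setting $\varepsilon = N^{-p/d}$ and reading off $\sigma_N^p(u)\lesssim N^{-p/d}$ gives $u\in\A_{p/d}^p$, completing the proof.
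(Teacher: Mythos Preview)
Your approach mirrors the paper's: decompose into regular and singular parts, handle $u_0$ by quasi-uniform refinement and Bramble--Hilbert, grade the mesh around each $x_i$ via dyadic shells, invoke the Binev--Dahmen--DeVore/Stevenson linear-closure result for the conformity overhead, and sum a geometric series that converges because $\gamma_i>0$. The paper makes this precise with an explicit two-loop bisection algorithm and a single unified exponent $\gamma=\tfrac12\min_i\gamma_i$ (the factor $\tfrac12$ absorbs the logarithmic terms), then verifies the grading is attained, bounds the number of marked elements per step, and carries out exactly the error analysis you sketch (including the separate treatment of the ``core'' elements containing $x_i$).

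One concrete slip: your grading exponent $\mu_i = 1-(p+1-\gamma_i)/p = (\gamma_i-1)/p$ is wrong---for $\gamma_i<1$ it is negative, so your mesh would \emph{coarsen} toward the singularity and neither the error bound nor the dyadic element count would go through. The mistake is that you dropped the volume factor: the squared local error is $h_T^{2p}\,|u_i|_{H^{p+1}(T)}^2 \sim h_T^{2p+d}\,\rho^{2(\gamma-p-1)}$, and equidistributing this yields $h_T \sim \delta\,\rho^{2(p+1-\gamma)/(2p+d)}$, whose exponent is always positive. This is precisely the grading the paper's algorithm enforces; with that correction your outline coincides with the paper's proof.
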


It is worth observing that, if $u$ satisfies the assumptions of the theorem, then we can only assure that $u \in H^{1+\epsilon}(\Omega)$ for all $0 < \epsilon < \min_{1\le i \le N} \gamma_i$. Uniform global refinements would only lead to $u \in \A_{\epsilon/d}^{p}$, but $\epsilon$ could be very small, and this rate is very pessimistic with respect to the one that can be obtained with adaptivity.

\begin{remark}\label{R:main-result}
 In order to shed some light on the assumptions of the theorem, we notice that they imply the following:
\begin{itemize}

 \item If we let $\gamma = \frac{\min_i \gamma_i}2$, 
we are able to control the singular terms through the following bound,
\begin{equation}\label{C-gamma}
C r_i^{\gamma} > \ln(r_i)^{k_i} r_i^{\gamma_i}. 
\end{equation}
and similar ones. They imply that, for each of the singular terms $u_i$, $i=1,2,\dots,N$, there exists a constant $C$, such that 
\begin{equation}\label{C-singamma}
|u_i| \le C r_i^\gamma, 
\quad |\grad u_i | \le C r_i^{\gamma-1}, 
\quad \text{and}  \quad |D^{p+1}u_i| \le C r_i^{\gamma-p-1},
\end{equation}
the last inequality holding only in the interior of the elements of $
\T_0 $, and thus also in the interior of any element of any refinement
of $\T_0$.  The constant $C$ depends on $c_i$, $k_i$, $\gamma_i$, 
the $ W^{p+1}_\infty $-norm of $\chi_i$, the
$ W^{1}_\infty $-norm of $g_i$, and the piecewise $W^{p+1}_\infty$-norm of $g_i$, that is, on the
$ W^{p+1}_\infty(S)$-norm
of $g_i$, for all $S \in \PP_i$.

The factor $ \frac{1}{2} $ in the definition of $\gamma$ is imposed to control the logarithmic terms. If all $k_i=0$, $ i=1,\dots,N $, then $\gamma$ could be chosen equal to $ \min_i \gamma_i $, and the same bounds would hold.

 \item if $\T$ is any refinement of $\T_0$, and $T\in\T$ with $ T \cap
   \N = \emptyset$ then $ u_i\vert_T \in H^{p+1}(T) $, $i=0,1,\dots,N$;

 \item since $p \ge 1$, and $d \le 3$, the Sobolev embedding theorem
   and the fact that $\gamma_i >0$ $i=1,2,\dots, N$ imply that each
   component $ u_i $, $ i=0,...,N $, is continuous in $
   \overline{\Omega} $, and consequently also $u$ is continuous;

\end{itemize}
This consequences of the assumptions are the main ingredients that will be used in the proof of our results below.
\end{remark}

\begin{notation}
From now on, the letter $C$ will denote a constant, not always equal,
depending on the given function $u$ of the assumption of
theorem~\ref{T:main-result}, through the $H^1(\Omega)$-norm of $u_0$,
the broken seminorm $|u_0|_{H^{p+1}_{\T_0}(\Omega)} :=
\left(\sum_{T\in\T_0} \|D^{p+1} u_0\|^2_{L^2(T)}\right)^{1/2}$, and
the parameters and functions defining the singular terms $u_i$,
$i=1,2,\dots,N$ of $u$ as in the second item of the previous remark.
We will reserve the notation $a\lesssim b$ to denote $a\le c\, b$ with
a constant $c$ depending only on shape regularity, or the geometry of
the domain. And $a \simeq b $ will indicate that $ a \lesssim b $ and
$b \lesssim a$.
\end{notation}

\section{Applications}\label{S:applications}

In this section we state two applications to elliptic PDE in two
dimensions in order to illustrate the applicability of our result.

\subsection{Poisson Equation}
Let $ \Omega$ be a polygonal domain in $ \mathbb{R}^2 $, not
necessarily Lipschitz. And let $u$ be the (weak) solution to
\begin{equation}\label{Poisson}
\begin{split}
- \Delta u &= f, \quad \text{in } \Omega, \\
         u &= 0, \quad \text{on } \partial\Omega,
\end{split}
\end{equation}
As a consequence of Theorem~3.1 in~\cite{Kellogg92} (see also~\cite{Dauge}, or Thm.~3.1 in~\cite{NVV}) it holds that if $f \in H^{p-1+\epsilon}(\Omega)$ for some $\epsilon > 0$, then $u$ can be written as in the assumptions in theorem~\ref{T:main-result}, where $\N = \{x_i\}_{i=1}^{N}$ is the set of vertices of $\Omega$, and $k_i = 0$, $i=1,2,\dots,N$.

In the case of $p=1$, $\epsilon$ can be taken to be zero, i.e.\ $f \in L^2(\Omega)$, the set $\N$ contains only the vertices of $\Omega$ with inner angle $\omega_i$ greater than $\pi$ ($c_i = 0$ for the other vertices), and $g_i(t) = \sin(\pi t/\omega_i )$ for all $i=1,2,\dots,N$.

In the case of $p > 1$, the set  $\N$ contains all the vertices of $\Omega$. In order to avoid the pathological cases where at least one inner angle $\alpha$ of $\Omega$ satisfies $\alpha p / \pi \in \NN$, we assume that $f \in H^{p-1+\epsilon}(\Omega)$ for some $\epsilon > 0$ instead of $H^{p-1}(\Omega)$, but this is not such a big restriction in practice. Moreover, this hypothesis can be weakened and ask that $f \in L^2(\Omega)$ and $f\vert_{T} \in H^{p-1+\epsilon}(T)$, for all $T \in \T_0$.

We conclude then that if $f \in H^{p-1+\epsilon}(\Omega)$ (piecewise over $\T_0$) then the solution $u$ to Poisson's equation~\eqref{Poisson} belongs to $\A_{p/2}^{p}$.

\subsection{Interface Problems for the Laplacian}
Let $ \Omega $ be a polygonal domain, not necessarily Lipschitz, that can be decomposed into disjoint subdomains $ \Omega_i , \ i=1,\dots,n_d $
with polygonal boundaries: $ \overline{\Omega} = \cup_{i=1}^{n_d} \overline{\Omega_i}$. We define the interface $ \Gamma =  \overline{( \cup_{i=1}^{n_d} ( \partial \Omega_i \setminus \partial \Omega ) )}$. 

Denote with $ a(x) = \sum_{i=1}^{n_d} a_i \chi_{\Omega_i}(x) $ the global weight function, which is constant and positive on each subdomain $ \Omega_i $.

We want to solve the following problem written in variational form:
\begin{equation}\label{inter-prob}
 \text{Find } u \in \V : \qquad \int_\Omega a \grad u \cdot \grad v \, dx 
          = \int_\Omega f v \, dx,
\qquad \forall v\in\V,
\end{equation}
where $ f \in L^2(\Omega) $, $\V = H^1_D(\Omega) = \left\{ v \in H^1(\Omega): v_{|\Gamma_D} = 0 \right\}$,  $\Gamma_D \subset \partial \Omega $ is the Dirichlet boundary. 
This problem  is usually called the \emph{interface problem for the Laplacian} and corresponds to the following strong form
\begin{equation*}\label{inter-prob2}
\begin{split}
- \nabla \cdot \big( a(x) \nabla u \big) &= f, \quad \text{in } \Omega_i,\quad i=1,2,\dots,n_d, \\
         u &= 0, \quad \text{on } \Gamma_D  \\ 
  \frac{\partial u}{\partial n} &= 0, \quad \text{on } \Gamma_N = \partial\Omega \setminus \Gamma_D, \\
   a_i \frac{\partial u_{|\Omega_i}}{\partial n_i} &= - a_j \frac{\partial u_{|\Omega_j}}{\partial n_j} \quad \text{on } \partial \Omega_i \cap \partial\Omega_j,
\end{split}
\end{equation*}
where $n$ denotes the outer unit normal to $\Omega$, and $n_i$ that of $\Omega_i$.

Following the original ideas from~\cite{Kellogg92}, Petzoldt proved (see Chapter~2 in~\cite{Petzoldt} and references therein) that the solution $u$ to~\eqref{inter-prob} satisfies the assumptions of theorem~\ref{T:main-result} for $p = 1$, if the mesh $\T_0$ matches the boundaries of the subdomains $\Omega_i$ and the points on $\partial\Omega$ where the boundary condition changes are vertices of $\T_0$. The points $x_\ell$ correspond to the vertices of the interface $\Gamma$, $\partial\Omega$, and to those points on $\partial\Omega$  where the boundary condition changes.

We conclude that if $ f \in L^2(\Omega) $ then by theorem~\ref{T:main-result} the  solution $u$ belongs to $\A_{1/2}^1$, and the optimal error decay is recovered.


It is worth mentioning that for certain singular points $x_\ell$, the value of $\gamma_\ell$ can be as close to zero as desired, depending on the values of $a(x)$ around $x_\ell$, providing very singular examples for the classical theory. In order to illustrate on this, we writedown the formulas derived by Kellogg~\cite{Kellogg75} to construct an exact solution of an elliptic problem with piecewise constant coefficients and vanishing right-hand side $f$; for the particular case $\Omega=(-1,1)^2$, $a = a_1$ in the first and third quadrants, and $a = a_2$ in the second and fourth quadrants.  An exact solution $u$ to~\eqref{inter-prob} for $f \equiv 0$ (and non-homogeneous Dirichlet boundary-values) is given in polar
coordinates by $u(r,\theta) = r^\gamma \mu(\theta)$, where
\begin{equation*}
\mu(\theta) = 
\begin{cases}
\cos((\pi/2-\sigma)\gamma) \cdot \cos((\theta-\pi/2+\rho)\gamma) 
\quad  & \text{if } 0 \le \theta \le \pi/2 \\
\cos(\rho \gamma) \cdot \cos ((\theta-\pi+\sigma) \gamma) 
       & \text{if } \pi/2 \le \theta \le \pi \\
\cos(\sigma \gamma) \cdot \cos((\theta - \pi - \rho) \gamma) 
       & \text{if } \pi \le \theta < 3\pi/2 \\
\cos((\pi/2 - \rho)\gamma) \cdot \cos((\theta - 3\pi/2 -
\sigma)\gamma) 
       & \text{if } 3\pi/2 \le \theta \le 2\pi
\end{cases}
\end{equation*}
and the numbers $\gamma$, $\rho$, $\sigma$ satisfy the 
nonlinear relations
\begin{equation}\label{ratios}
\left\{
\begin{aligned}
&R := {a_1}/{a_2} 
  = - \tan((\pi/2 - \sigma)\gamma) \cdot \cot(\rho\gamma)
\\
&1/R = - \tan(\rho \gamma) \cdot \cot(\sigma \gamma) 
\\
&R = -\tan(\sigma \gamma) \cdot \cot((\pi/2-\rho)\gamma)
\\
&0 < \gamma < 2 \\
&\max\{ 0, \pi \gamma - \pi\} < 2 \gamma \rho 
    < \min\{\pi \gamma, \pi \}   \\
&\max\{ 0, \pi - \pi \gamma \} < - 2 \gamma \sigma 
    < \min\{ \pi, 2 \pi - \pi \gamma \}.
\end{aligned}
\right.
\end{equation}
Choosing $\gamma = 0.1$, and solving~\eqref{ratios} for $R$, $\rho$
and $\sigma$ using Newton's method we obtain 
$R= a_1/a_2 \cong 161.4476$, 
$\rho = \pi/4$, 
$\sigma \cong -14.92256$.
A smaller $\gamma$ would lead to a larger ratio $R$, but in principle $\gamma$ may be as close to $0$ as desired.

This function $u$ belongs to the Sobolev space $H^{1+\gamma}(\Omega)$, and is thus \emph{barely in $H^1(\Omega)$}, but---according to our results---still in $\A_{p/2}^p$ for all $p \ge 1$. That is, an adaptive finite element approximation to a solution like this, using Lagrange finite elements of degree $p$ will lead to a sequence of meshes and discrete solutions  $\left\{ \T_k, u_k \right\}_k$ satisfying $\| u - u_k \|_{H^1(\Omega)} \le C \left(\#\T_k\right)^{-p/2}$. On the other hand, the Besov regularity of the solutions to~\eqref{inter-prob} is not well established, and thus the results of~\cite{BDDP} are not yet applicable to the interface problem for the laplacian. Until the Besov regularity of solutions to PDE is further developed, our result --- which is far from being a near characterization of the class of functions that can be approximated with optimal decay $ N^{-\frac{p}{d}} $ --- still provides a useful tool to investigate the convergence rate of AFEM for PDE.

\section{Construction}\label{S:construction}

From now on we assume that $u$ is as in the assumptions of theorem~\ref{T:main-result} and we will present an algorithm to construct via newest-vertex bisection a mesh fulfilling the properties stated in the theorem.

Before we introduce the algorithm we will present a heuristic idea with the ideal properties that the optimal mesh should have. This will motivate the precise definition of the algorithm, which is rather technical, but achieves with controlled complexity the goal of \emph{equidistribution}.

\subsection{Heuristic Idea}

Everything in this subsection will be heuristics, and is presented here---following the arguments in~\cite{Gris1,Liao-Noch,BAGMO}--- in order to motivate the properties that the optimal mesh should fulfill. The precise, rigorous proof will be given in the following sections, after the algorithm for constructing the mesh has been presented.

In order to introduce the basic idea consider the simplest case of a function $u$ written in polar coordinates as $u = r^\gamma \sin(\gamma \theta)$ on a two dimensional domain with a reentrant corner of inner angle $\pi/\gamma$ at the origin. Suppose that we approximate $u$ with continuous piecewise linear finite elements ($p=1$) on a mesh $\T$. The $H^1$ seminorm $|u-\IT u|_{1,T}$ of the error between $u$ and its Lagrange interpolant $\IT u$ on each element is bounded by $h \| D^2 u \|_{L^2(T)}$ if $0 \notin T$ and by $\|Du\|_{L^2(T)}$ if $0 \in T$. These quantities (squared) also satisfy the following:
\[
 \begin{aligned}
 h_T^2 \| D^2 u \|_{L^2(T)}^2 &\cong h_T^2  r_T^{2(\gamma-2)} |T| \cong h_T^4 r_T^{2(\gamma-2)}, \qquad&\text{if }0 \notin T, \\
 \|Du\|_{L^2(T)}^2 &\cong \int_0^{h_T} r^{2(\gamma-1)} \, r \, dr \cong h_T^{2\gamma} , \qquad&\text{if } 0 \in T,
\end{aligned}
\]
where $r_T$ denotes the distance of $T$ to the origin and $h_T := |T|^{1/2} \cong \diam(T)$. 
In order to achieve the equidistribution of the local error bounds we then require for the mesh $\T$ that, given a small parameter $h > 0$, the elements satisfy
\[
  h_T^4 r_T^{2(\gamma-2)} \cong h^{2\gamma}, \quad\text{if } 0\notin T, \qquad
  \text{and}\qquad h_T \cong h, \quad\text{if } 0 \in T.
\]

Suppose now that this goal is achievable. More precisely, we can classify the elements into rings at dyadic distance to the origin, by defining
\[
D_k = \left\{ T \in \T : 2^{-k-1} \leq r_T < 2^{-k} \right\},
\]
for $k \in\NN$, $k < K:=\lfloor\log_2(1/h)\rfloor$, and $D_{K} = \left\{ T \in \T: r_T < 2^{-K} \right\}$.

Then, on the one hand, the elements $T \in D_k$, have size $|T| = h_T^2 \cong h^{\gamma} r_T^{-(\gamma-2)} \cong h^{\gamma} 2^{k(\gamma-2)}$, and thus $\# D_k \cong \frac{2^{-2k} }{ h^{\gamma} 2^{k(\gamma-2)}} = h^{-\gamma}2^{-k\gamma}$ which implies that 
\[
 \#\T \cong \sum_{k \le K} \# D_k \cong h^{-\gamma} \sum_k 2^{-k\gamma} \cong  h^{-\gamma}.
\]
On the other hand, the error satisfies
\[
 |u - u_h |_{1,\Omega}^2 \cong \#\T \, h^{2\gamma} \cong h^{-\gamma} h^{2\gamma}
= h^\gamma \cong \left(\# \T\right)^{-1}.
\]
And this finally implies that $ |u - u_h |_{1,\Omega} \lesssim \left( \# \T\right)^{-1/2}$, and thus $u \in \A_{1/2}^1$.

In the case $d=3$ if $u$ has a singularity like $r^{\gamma}$ as in the previous example, the bound $ |u - u_h |_{1,\Omega} \lesssim \left( \# \T\right)^{-1/3}$, would be obtained if 
\[
  h_T^5 r_T^{2(\gamma-2)} \cong h^{2\gamma +1}, \quad\text{if } 0\notin T, \qquad
  \text{and}\qquad h_T \cong h, \quad\text{if } 0 \in T.
\]

\subsection{Algorithm}

In this section we will introduce the algorithm that will achieve using newest-vertex bisection, a mesh with the precise grading stated in the previous subsection, generalized to polynomials of degree $p$. 

From now on we will use the notation
\[
r_X = \min_{x_i \in \mathcal{N}}  \dist(x_i , X) 
\]
defined for $X$ compact, typically $X$ is a triangle $T$ or a point $x$, where $\N$ denotes the finite set where the singularities are located (as in the assumptions of theorem~\ref{T:main-result}).

We choose and fix $ \gamma = \frac{\min_i \gamma_i}{2} $. This choice
allows us to bound the singular terms as in~(\ref{C-singamma}).

Let $\T_0$ be the given initial mesh and let $\delta>0$ be a small parameter so that
$ \# \T_0 \leq \delta^{-d} $. Later $\delta$ will be chosen such that $\delta^{p} \approx \varepsilon$, where $\varepsilon$ is the error to be achieved between $ u $ and $ u_{\T} $, a discrete approximation to $u$ in $ \V_{\T} $, and $ \T $ the mesh generated by the algorithm (see proof of theorem~\ref{T:main-result} in section~\ref{s:main-proof}). Now let $ K \in \mathbb{N} $ be such that
\begin{equation}\label{K}
 2^{-\frac{(K+1) ( 2 \gamma + d -2)}{2p+d}}\leq \delta < 2^{-\frac{K ( 2 \gamma + d -2)}{2p+d}}. 
 \end{equation}

Denoting for any element $T$ the elementsize by $h_T = |T|^{1/d}$, the constructive algorithm reads:

\

$\T_{0,0}^{c} \leftarrow \T_0$

$ j = 0 $

\texttt{\% initial (global) refinement to control the error of $u_0$}

\texttt{\% FIRST LOOP}

\texttt{do}

\ \ $ \mathcal{M}_{0,j} = \{ T\in \T_{0,j}^{c} : h_T >
\delta \} $

\ \ $ \T_{0,j+1} \leftarrow $\texttt{ refine}$ (\T_{0,j}^c , \mathcal{M}_{0,j}) $

\ \ $ \T_{0, j + 1}^{c} \leftarrow $\texttt{ complete}$ (\T_{0 , j + 1}) $

\ \ $ j \leftarrow j + 1 $

\texttt{until} $ \mathcal{M}_{0, j-1} = \emptyset $

$ J = j $

$ \T_{1}^{c} \leftarrow \T_{0,j}^{c} $

$ \ell = 1 $

\texttt{\% Selective refinement according to distance to singularities}

\texttt{\% SECOND LOOP}

\texttt{while} $( \ell < d (K+1) )$

\ \ $ \Omega_{\ell} = \bigcup \{ T \mid T\in \T_{\ell}^c \ \wedge
\ r_T \leq 2^{-\frac{\ell}{d}} \} $

\ \ $ \M_{\ell} = \{ T \subset \Omega_{\ell} : \ h_T >
\delta \, 2^{\frac{2 \ell ( \gamma - p -1 )}{d(2p+d)}} \}$

\ \ $ \T_{\ell + 1} \leftarrow $\texttt{ refine}$ ( \T_{\ell}^c ,
\mathcal{M}_{\ell} ) $

\ \ $ \T_{\ell + 1}^{c} \leftarrow $\texttt{ complete}$ (\T_{\ell + 1}) $

\ \ $ \ell \leftarrow  \ell + 1 $

\texttt{end}

\ 

The algorithm makes use of two routines that need further explanation. The first one, 
\[
 \T_{\text{new}} \leftarrow \texttt{ refine}(\T_{\text{old}}, \M)
\]
receives a mesh $\T_{\text{old}}$, usually admissible, and a set $\M $ of \emph{marked} elements from $\T_{\text{old}}$. It returns a new mesh $\T_{\text{new}}$ that is obtained after bisecting once the marked elements according to the newest-vertex bisection rule. The new mesh is not necessarily admissible (it may have hanging nodes), but it clearly holds that
\[
 \# \T_{\text{new}} = \# \T_{\text{old}} + \# \M,
\]
i.e., $ \# \T_{\text{new}} - \# \T_{\text{old}} = \# \M$.

The second routine that is used,
\[
 \T^c \leftarrow \texttt{ complete}(\T)
\]
receives a mesh $\T$ that is not necessarily admissible, and returns a new mesh $\T^c$ which is made admissible by refining the least amount of necessary elements, again by the newest-vertex bisection rule. The study of complexity of this routine is not as easy as that of the previous one, and it is not true that there exists a constant $\mathcal{C}$ such that 
\[
 \# \T_{\ell+1}^c \le \# \T_\ell^c + \mathcal{C} \big( \# \T_{\ell+1} - \# \T_\ell^c \big).
\]
The complexity result that holds---regarding the \emph{spreading of refinement} implied by the completion algorithm---is the following one, which is a little bit weaker, but fundamental and sufficient for the purposes of studying optimality of AFEM:

\begin{theorem}\label{T:bdd}
Let $\T_0 = \T_0^c$ be an initial admissible mesh of a polygonal (polyhedral) domain $\Omega$ in $ \mathbb{R}^2 $ ( $ \mathbb{R}^3 $), whose elements edges are properly flagged in the sense that whenever an interior edge is a refinement edge, it is the common refinement edge for all adjacent elements. If the sequence $\{ \T_\ell^c \}_{\ell \ge 1}$ is obtained by subsequent calls to:
\begin{align*}
    \T_{\ell + 1} &\leftarrow \text{\tt refine} ( \T_{\ell}^c ) \\
\T_{\ell + 1}^{c} &\leftarrow \text{\tt complete} (\T_{\ell + 1}), 
\end{align*}
then for $k \ge 1$ we have that
\begin{equation*}
\# \T_{k}^c - \# \T_0 
\leq \mathcal{C} \bigg( \sum_{\ell = 1}^{k} (\# \T_{\ell +1} - \# \T_{\ell}^{c}) \bigg),
\end{equation*}
where $\C$ is a constant depending only on $\T_0$.
\end{theorem}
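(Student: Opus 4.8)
The plan is to reduce the statement to the well-known result of Binev--Dahmen--DeVore~\cite{BDD} (and Stevenson~\cite{Stev1}) on the complexity of newest-vertex bisection with completion. Recall that result: if one starts from a properly flagged admissible mesh $\T_0$ and performs a sequence of individual bisections of marked elements, each followed by the completion procedure, then the total number of elements created by all the completions is bounded by a constant (depending only on $\T_0$) times the total number of elements that were explicitly marked for bisection. In our notation, if we call \texttt{refine} on a set $\M_\ell$ at stage $\ell$ and then \texttt{complete}, the cumulative estimate
\[
 \#\T_k^c - \#\T_0 \;\le\; \mathcal{C}\,\sum_{\ell=1}^{k} \#\M_\ell
\]
holds. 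Since by construction of the \texttt{refine} routine $\#\M_\ell = \#\T_{\ell+1} - \#\T_\ell^c$ (one new element per marked element), this is exactly the claimed inequality. So the first step is to state this cumulative bound precisely, citing \cite{BDD,Stev1}, and to note that the ``proper flagging'' hypothesis in the theorem is precisely the initial-labelling condition under which that complexity result is valid.

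The second step is to address the gap between the classical statement and ours: the classical complexity theorem is usually phrased for a process in which one marks elements of the \emph{current admissible mesh} $\T_\ell^c$, whereas in our algorithm the subsequent \texttt{refine}/\texttt{complete} calls act on meshes that themselves may be intermediate (non-admissible) outputs. I would argue that this is only a bookkeeping matter: each \texttt{refine} call takes an admissible mesh $\T_\ell^c$, bisects a marked subset once to produce the possibly-nonadmissible $\T_{\ell+1}$, and \texttt{complete} turns $\T_{\ell+1}$ into the admissible $\T_{\ell+1}^c$. Concatenating, one sees that the whole sequence $\T_1^c, \T_2^c, \dots$ is generated by a single stream of individual bisection requests interleaved with completions, which is exactly the setting of \cite{BDD}. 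One must check that the completion at stage $\ell$ does not destroy the proper flagging needed at stage $\ell+1$: this is guaranteed because newest-vertex bisection, applied from a properly flagged mesh, preserves the compatibility of refinement edges, which is itself part of why the algorithm terminates without hanging nodes.

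The main obstacle, and the part requiring genuine care rather than citation, is making rigorous the claim that the completion cost is controlled \emph{in cumulative form} over all stages, not stage by stage --- indeed the excerpt explicitly warns that the per-step bound $\#\T_{\ell+1}^c \le \#\T_\ell^c + \mathcal{C}(\#\T_{\ell+1}-\#\T_\ell^c)$ is \emph{false}. The resolution is precisely the non-local, amortized nature of the BDD argument: a single bisection can trigger a long chain of conforming closures, but the total length of all such chains over the entire history is still linear in the total number of marked elements, because each completion-induced bisection can be charged (via the labelled-tree / generation argument of \cite{BDD}) back to a nearby explicitly marked element, and no marked element is charged more than a bounded number of times. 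Thus I would spend the bulk of the proof recalling (or invoking, with a pointer to Lemma/Theorem numbers in \cite{BDD} or \cite{Stev1}) that amortization lemma and verifying that the hypotheses --- proper initial flagging, dimension $d\in\{2,3\}$, shape regularity inherited from $\T_0$ --- are met here, whence the constant $\mathcal{C}$ depends only on $\T_0$. The conclusion then follows immediately by summing.
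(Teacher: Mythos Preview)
Your proposal is correct and matches the paper's treatment: the paper does not give an independent proof of this theorem but simply states that it was first proved in~\cite{BDD} for triangles and extended to simplicial meshes of any dimension in~\cite{Stev2}. Your reduction to the Binev--Dahmen--DeVore amortized complexity result is exactly this; just note that the higher-dimensional extension is~\cite{Stev2}, not~\cite{Stev1}.
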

This result was first proved in~\cite{BDD} for triangles, and later extended to simplicial meshes of any dimension in~\cite{Stev2}.

As a consequence of this we have that if now $\T_{0,j}$, $\T_{0,j}^c$ $\T_\ell$, $\T_\ell^c$ are the meshes obtained by our algorithm it holds that
\begin{equation}\label{bdd}
\# \T_{d(K+1)}^c - \# \T_0 
\leq \mathcal{C} \bigg( \sum_{\ell = 1}^{d(K+1)-1}
( \# \T_{\ell +1} - \# \T_{\ell}^{c} ) + \sum_{j = 0}^{J-1} ( \#
\T_{0,j +1} - \# \T_{0,j}^{c} ) \bigg).
\end{equation}

\begin{remark}
 
Before proceeding to the proof of our result, some remarks are in order:

\begin{itemize}
 \item The idea of the algorithm is to achieve an equidistribution of the error following the heuristics stated in the previous section. Since the refinement is stronger closer to the singularity points, our approach considers a sequence of regions $\Omega_\ell$ around them with  geometrically decreasing radii given by $2^{-\frac{\ell}{d}}$. The denominator $d$ in the exponent is related to the fact that we perform only one bisection to marked elements in \texttt{refine}, and $d$ are necessary to achieve a halving of $h_T$.

 \item The algorithm does not take into account the different sizes of the powers $\gamma_i$, it just looks at a worst case scenario taking a unified value $\gamma = \frac{ \min_i \gamma_i}{2}$. As we will see later, the property $\gamma > 0$ is the only one used in the proof. In the same manner, the distance to the singularity points $x_i$ is unified by taking the minimum distance symbolized by $r_T$. It may look that the simplification introduced by this \emph{unification} will lead to sub-optimal meshes, and it is true that the constant $\mathbf{C}_{u,\T_0}$ in~\eqref{statement} may be bigger than necessary with this approach. But this is an a priori approach where we want to show the \emph{membership} of certain functions to the spaces $\A_{p/d}^{p}$, not caring about the size of their norm.

 \item If an efficient construction of the mesh is desired, the algorithm could be improved by marking separately according to the different strengths of the singularities. This would lead to a better constant $\mathbf{C}_{u,\T_0}$, but the overall theoretical result will be the same. We decided to present this unified approach for the ease of presentation.

 \item One important property of the newest-vertex bisection rule is that it leads to a sequence of meshes with a uniformly bounded shape-regularity constant, which depends only on that from the initial mesh $\T_0$ and the new-vertex flagging of the initial mesh. We thus have that all the meshes $\T_\ell^c$ obtained by the application of our algorithm are shape-regular with a uniform constant. 
\end{itemize}

\end{remark}

\subsection{Properties of the Algorithm}

In this section we will bound through a series of lemmas the complexity of the resulting mesh $\T_{d(K+1)}^c$, and in the next section we will relate this complexity to the error of the best approximation to $u$ through finite element functions over $\T_{d(K+1)}^c$. 

The following lemma is related to the termination of the first loop of the algorithm in a finite number of steps, and to a control on the number of elements added.
The termination of the second loop is straightforward, since it is just a \emph{for} loop in disguise.

\begin{lemma}\label{1loop}
The first loop of the algorithm terminates after $J$ iterations, with 
$J \leq \log_{2}\left(\frac{\max_{T \in \T_0} |T|}{\delta^{d}}\right) + 1$
and there exists a constant $\CC_1=2|\Omega|$ such that:
\begin{equation}\label{complexity-initial-loop}
\sum_{j=0}^{J-1} \big(\# \T_{0,j+1} - \# \T_{0,j}^{c}\big) \leq
\CC_1 \delta^{-d}.
\end{equation}
This implies that for all $ T \in \T_{1}^{c} $, $ |T| < \delta^{d}  $.
\end{lemma}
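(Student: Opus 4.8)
The plan is to analyze the first loop as a bounded-depth uniform-type refinement. The loop repeatedly bisects every element whose size exceeds $\delta$; a single call to \texttt{refine} followed by \texttt{complete} never increases sizes, so after enough passes all elements satisfy $h_T \le \delta$, at which point $\M_{0,j}=\emptyset$ and the loop stops. To bound the number $J$ of iterations, I would track the maximal element volume: each element marked in $\M_{0,j}$ gets bisected once, and every $d$ bisections along a branch halve the diameter (equivalently, divide the volume by $2$); since \texttt{complete} only refines further, after $j$ passes the largest volume among elements that were ever above threshold is at most $2^{-j}\max_{T\in\T_0}|T|$ — more carefully, an element of $\T_0$ of volume $V$ needs its volume driven below $\delta^d$, which takes at most $\log_2(V/\delta^d)$ halvings, hence at most that many passes through the \texttt{do} loop (each pass bisects every still-too-large element once, and after $d$ passes all such descendants have had their volume halved at least once — actually each pass halves the volume of every marked element, since one NVB bisection of a simplex exactly halves its volume). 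This gives $J \le \log_2\!\big(\max_{T\in\T_0}|T| / \delta^d\big) + 1$.

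For the complexity bound \eqref{complexity-initial-loop}, recall that $\#\T_{0,j+1}-\#\T_{0,j}^c = \#\M_{0,j}$. Every element in $\M_{0,j}$ has $h_T > \delta$, i.e.\ $|T| > \delta^d$, so $\#\M_{0,j} \le |\Omega|/\delta^d = |\Omega|\,\delta^{-d}$. Summing over $j=0,\dots,J-1$ would naively give a factor $J$, which is too weak; instead I would argue geometrically in the volume scale. The elements marked at pass $j$ all have volume $> \delta^d$ and are pairwise disjoint, but more importantly an element marked at pass $j$ has volume at most $2^{-j}\cdot(\text{volume of its ancestor in }\T_0)$, hence $\#\M_{0,j} \le |\Omega| / (2^{-j}\cdot 2\delta^d)$ is the wrong direction; the correct direction is that the elements still above threshold shrink, so at pass $j$ their common volume is $> \delta^d$ and $\le 2^{-(j - j_0)} \cdot (\text{something})$ from above... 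The clean way: the marked elements at pass $j$ have volume in a dyadic range, and the \emph{total} volume they occupy is $\le |\Omega|$, so $\#\M_{0,j} \le |\Omega|/v_j$ where $v_j > \delta^d$ is a lower bound on their volume; since the relevant elements are being halved at each pass, the smallest volume that can still be marked at pass $j$ is bounded below by roughly $2^{j-J}\,\delta^d \cdot \text{const}$, making $\sum_j \#\M_{0,j} \le |\Omega|\,\delta^{-d}\sum_{j} 2^{J-j} \cdot 2^{-J}\cdots$ — I would organize this as a geometric sum $\sum_{j=0}^{J-1}\#\M_{0,j} \le \sum_{j} |\Omega|\,2^{-(J-1-j)}\delta^{-d} \le 2|\Omega|\,\delta^{-d}$, yielding $\CC_1 = 2|\Omega|$. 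Finally, since upon termination no element has $h_T>\delta$ and $\T_1^c = \T_{0,J}^c$, every $T\in\T_1^c$ satisfies $|T|<\delta^d$ (strict, because an element of volume exactly $\delta^d$ would have $h_T=\delta$, not $>\delta$, so it is never marked, but starting from $\#\T_0\le\delta^{-d}$ one can also argue $|T_0|<\delta^d\cdot\#\T_0\le\ldots$; in any case the termination criterion $h_T\le\delta$ combined with the fact that after a bisection sizes strictly drop gives $|T|\le\delta^d$, and the strict inequality follows from the initial mesh hypothesis or from one extra bisection).

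The main obstacle I anticipate is making the summation in \eqref{complexity-initial-loop} rigorous without losing the factor $J$: the subtlety is that \texttt{complete} may refine elements that were \emph{below} threshold, so one cannot simply say "each original element is bisected a bounded number of times during the first loop" — one must use Theorem \ref{T:bdd} (the BDD/Stevenson spreading estimate) to control the completions, together with the geometric decay of $\#\M_{0,j}$ coming from the shrinking of marked volumes. Concretely, I would first bound $\sum_j \#\M_{0,j}$ (the pure \texttt{refine} contribution) by the geometric argument above to get $\le 2|\Omega|\delta^{-d}$, and note that the lemma as stated only asks for this quantity; the completion cost is folded into the later application of \eqref{bdd}. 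So the real work is just the dyadic bookkeeping of how many elements can exceed the threshold $\delta$ at each pass, which is where I would be most careful.
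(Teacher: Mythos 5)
Your bound on $J$ and your reduction of \eqref{complexity-initial-loop} to bounding $\sum_{j}\#\M_{0,j}$ (using $\#\T_{0,j+1}-\#\T_{0,j}^c=\#\M_{0,j}$ and deferring the completion cost to the later application of \eqref{bdd}) both match the paper. The gap is in the summation itself. Your proposed per-pass estimate $\#\M_{0,j}\le|\Omega|\,2^{-(J-1-j)}\delta^{-d}$ rests on the claim that every element marked at pass $j$ has volume at least of order $2^{J-1-j}\delta^{d}$, and this is false for a graded initial mesh (which the algorithm is explicitly designed to accept): $\T_0$ may contain one large element, which forces $J$ to be large, together with many elements of volume just above $\delta^{d}$, all of which are marked already at pass $j=0$, so that $\#\M_{0,0}$ is of order $|\Omega|\delta^{-d}$ rather than $|\Omega|2^{-(J-1)}\delta^{-d}$. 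The pass index $j$ simply does not control the volume of the marked elements, which is why your write-up keeps oscillating between ``wrong direction'' and the desired inequality without landing on a proof.

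The paper's bookkeeping avoids this by classifying marked elements by volume rather than by pass: it sets $\F_i=\big\{T\in\bigcup_k\T_{0,k}^c\ :\ 2^{i}\delta^{d}\le|T|<2^{i+1}\delta^{d}\big\}$ and observes that the elements of a fixed $\F_i$ have pairwise disjoint interiors even though they belong to different meshes of the hierarchy --- if one strictly contained another, their volumes would differ by a factor of at least $2$ and they could not lie in the same dyadic band. Hence $\#\F_i\le|\Omega|\,2^{-i}\delta^{-d}$, and since every marked element lies in some $\F_i$, one gets $\sum_j\#\M_{0,j}\le\sum_i\#\F_i\le 2|\Omega|\delta^{-d}$. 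This is exactly the geometric series you were reaching for, but indexed by volume class instead of by iteration. (Your side remark that the termination criterion only yields $|T|\le\delta^{d}$ rather than the stated strict inequality points to a harmless measure-zero edge case that the paper's proof also glosses over.)
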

\begin{proof}
Observe first that if one bisects an element $T \in \T_0$, $J$ times with $J \ge \log_{2}\left(\frac{\max_{T \in \T_0} |T|}{\delta^{d}}\right) + 1 $, then the measure of the resulting sub-elements will be strictly less than $\delta^{d}$, and the marking step will not mark them anymore. This
proves the first part of the statement.

In order to prove the bound~(\ref{complexity-initial-loop}) we define, for $i \ge 0$
\[
 \F_i = \bigg\{ T \mid T \in \bigcup_{k}
\T_{0,k}^c \,\wedge\, 2^{i} \delta^{d} \leq |T| < 2^{i+1}
\delta^{d} \bigg\}.
\]
It is easy to see that even though $\F_i$ contains elements belonging to different meshes, they do not overlap, and then:
\[
| \Omega | \geq \sum_{T \in \F_i } |T|  \geq
\sum_{T \in \F_i } \delta^{d} 2^{i} = \delta^{d} 2^{i} (\# \F_i)  ,
\]
which implies that $\# \F_i \leq  | \Omega | \delta^{-d}
2^{-i} $ .

Now, applying these estimates,  and using that
\[
\bigcup_{i=0}^{\infty}
\F_i = \{ T \mid T \in \bigcup_{k=0}^{J} \T_{0,k} \
\wedge \ |T| \geq \delta^{d} \} = \bigcup_{j=0}^{J-1}
\mathcal{M}_{0,j},
\]
we obtain that
\begin{align*}
\sum_{j=0}^{J-1} (\# \T_{0,j+1} - \# \T_{0,j}^{c}) =
\sum_{j=0}^{J-1} \# \mathcal{M}_{0,j}= \sum_{i=0}^{\infty} \#
\F_i \leq 2|\Omega| \delta^{-d},
\end{align*}
and the claim is proved.
\end{proof}

\begin{remark}
 This proof is a little complicated due to the way the algorithm was proposed in order to take into account any previous grading of the mesh. Observe that in the first loop we do not refine all the elements, but only those which are bigger than the threshold $\delta$, instead of doing just uniform global refinements. If we did this, the proof would be simpler, but the number of elements in $\T_1^c$ would be unnecessarily bigger.
\end{remark}

The following lemma is just an observation of the fact that if a point $z$ is a vertex of a shape-regular triangulation, then the distance of the elements to $z$ is an upper bound to the diameter of the element, unless of course the distance is zero. This means that the diameter of the elements can grow at most linearly with the distance to a point. 

\begin{lemma}\label{dist}
Let $ \T $ be a regular mesh such that $z$ is a node, then $
\forall T \in \T $ with $ \dist(z,T) \neq 0 $ we have that $ |T |
\lesssim \dist(z,T)^d $, or $h_T \lesssim \dist(z,T)$.
\end{lemma}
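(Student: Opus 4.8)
The claim is a standard geometric fact about shape‑regular meshes, so the plan is to exploit the definition of shape regularity directly. Let $T\in\T$ be an element with $\dist(z,T)\neq 0$, and let $y\in\overline T$ be a point realizing the distance, so that $|y-z|=\dist(z,T)$. Since $z$ is a node of $\T$, $z$ is a vertex of every element containing it; in particular the elements sharing the vertex $z$ form a patch $\omega_z=\bigcup\{T'\in\T: z\in T'\}$, and $T$ is disjoint from the interior of each such $T'$ whenever $z\notin T$ — which is forced here because $\dist(z,T)>0$. The key quantitative input is that shape regularity bounds the ratio $h_{T'}/\rho_{T'}$ uniformly, where $\rho_{T'}$ is the inradius of $T'$; equivalently, each element containing $z$ has diameter comparable to the radius of the largest ball around $z$ contained in $\omega_z$.

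\textbf{Key steps.} First I would fix the point $y\in\overline T$ with $|y-z|=\dist(z,T)$ and consider the segment $[z,y]$. Near $z$ this segment must pass through some element $T'$ of the patch $\omega_z$; since $z$ is a vertex of $T'$, the segment stays inside $T'$ for a length at least $c\,h_{T'}$, where $c$ depends only on the shape‑regularity constant (this is just the statement that a shape‑regular simplex with a vertex at $z$ contains a cone of fixed opening and length $\simeq h_{T'}$ emanating from $z$). Hence $\dist(z,T)=|y-z|\ge \dist(z,\partial\omega_z\setminus\{\text{faces at }z\})\gtrsim h_{T'}$. Second, I would invoke the uniform shape regularity of the whole mesh (which, as noted in the remarks preceding the lemma, holds with a constant depending only on $\T_0$ for all meshes produced by newest‑vertex bisection): neighbouring elements of a shape‑regular mesh have comparable sizes, so walking from $T'$ across a bounded number of elements to $T$ gives $h_T\lesssim h_{T'}\lesssim \dist(z,T)$. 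Finally, $|T|\simeq h_T^d$ by shape regularity, giving $|T|\lesssim\dist(z,T)^d$.

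\textbf{Main obstacle.} The only real subtlety is the step $h_{T'}\lesssim\dist(z,T)$, i.e.\ turning ``$z$ is a vertex of $T'$ and $T$ does not touch $z$'' into a genuine lower bound on the distance. The clean way is: any element $T''$ of the mesh that is \emph{not} in the patch $\omega_z$ has positive distance to $z$ bounded below by the inradius-type quantity of the patch, namely $\dist(z,T'')\ge \min_{T'\ni z}\rho_{T'}\gtrsim \max_{T'\ni z}h_{T'}$, the last comparison again being shape regularity. Since $T$ is such an element (it is disjoint from $z$), and $T$ is reached from the patch through $O(1)$ size‑comparable elements, one concludes $\dist(z,T)\gtrsim h_T$. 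All constants depend only on the uniform shape‑regularity constant of the family, hence ultimately only on $\T_0$ and its flagging, consistent with the $\lesssim$ notation fixed in the paper. I would keep this argument brief, as it is entirely elementary once the patch picture is in place.
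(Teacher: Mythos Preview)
Your argument has a genuine gap in the passage from $h_{T'}$ (with $T'\ni z$) to $h_T$. You assert that ``$T$ is reached from the patch through $O(1)$ size-comparable elements,'' but this is unjustified and in general false: when $T$ lies far from $z$, arbitrarily many elements may separate $T$ from the vertex patch $\omega_z$, and $h_T$ can be much larger than $\max_{T'\ni z}h_{T'}$ (picture a mesh graded to be very fine near $z$ and coarse away from it). Your bound $\dist(z,T)\gtrsim h_{T'}$ is correct, but it does not imply $\dist(z,T)\gtrsim h_T$ without a separate treatment of the ``far'' case. You also misidentify the obstacle: the inequality $h_{T'}\lesssim\dist(z,T)$ is the easy part; what is actually missing is $h_T\lesssim h_{T'}$, and that comparison simply fails whenever $T$ is not adjacent to $\omega_z$.

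The paper avoids this by centering the patch at $T$ rather than at $z$. With $\omega_T=\bigcup\{T'\in\T:T'\cap T\neq\emptyset\}$ one distinguishes two cases. If $z\notin\omega_T$, shape regularity gives $\dist(z,T)\gtrsim h_T$ directly, since any path from $T$ to $z$ must cross a full neighbor of $T$. If $z\in\omega_T\setminus T$, then $z$ is a vertex of some neighbor $T'$ of $T$, so a \emph{single} adjacency yields $h_{T'}\approx h_T$, and then $\dist(z,T)\approx h_{T'}\approx h_T$. Working with $\omega_T$ confines the neighbor-size comparison to one hop, exactly where local comparability is valid; working with $\omega_z$, as you do, forces an unbounded chain and the argument breaks.
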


This result may be familiar to some practitioners, but it is not completely obvious. A stronger result was proved in~\cite[Lemma~5.1]{NPV}, but we decided to include its proof here for the sake of completeness.

\begin{proof}
Let $ T $ be an element of $ \T $  and let us define  $ \omega_T = \bigcup \{
\bar{T} \mid  \bar{T} \in \T \ \wedge \ T\cap \bar{T} \neq
\emptyset \}$. If $ z \notin \omega_T $ then by shape regularity, $\dist(z,T) \geq c h_T$. If $ z \in \omega_T \backslash T $, then $z$ is a vertex of a neighboring element $T'$ and thus $ \dist(z,T) \approx h_{T'} \approx h_T $.
\end{proof}

The next result implies that the desired grading of the mesh was achieved by the algorithm.

\begin{lemma}\label{size}
Let $ \T = \T_{d(K+1)}^{c} $, then for $0 \le \ell \le d(K+1)$ the following property holds:
\[
T \in \T \quad\text{and}\quad r_T<  2^{-\frac{\ell }{d}} 
\qquad \implies\qquad
|T| < \delta^{d} 2^{\frac{2 \ell (\gamma - p - 1)}{2p+d}}.
\]
\end{lemma}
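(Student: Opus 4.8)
The plan is to derive the lemma from an induction on the stage index $\ell$ of the second loop, proving a grading invariant for the intermediate meshes $\T_\ell^c$. Write $\theta_\ell := \delta\, 2^{2\ell(\gamma-p-1)/(d(2p+d))}$, so that the marking set at stage $\ell$ is $\M_\ell=\{T\subset\Omega_\ell:h_T>\theta_\ell\}$ and the desired conclusion reads $|T|=h_T^d\le\theta_\ell^d=\delta^d\,2^{2\ell(\gamma-p-1)/(2p+d)}$. Two preliminary remarks set things up. (i) One newest-vertex bisection halves the volume of an element, i.e.\ multiplies $h_T$ by $2^{-1/d}$, and both \texttt{refine} and \texttt{complete} only subdivide; hence element sizes do not increase as one passes from $\T_\ell^c$ to any finer mesh in the chain $\T_1^c,\T_2^c,\dots,\T_{d(K+1)}^c$, each $\Omega_\ell$ is a union of cells of every such mesh at least as fine as $\T_\ell^c$, and $\Omega_\ell\subseteq\Omega_{\ell-1}$ (a point of $\Omega_\ell$ lies in a cell $T\in\T_\ell^c$ with $r_T\le 2^{-\ell/d}$, whose $\T_{\ell-1}^c$-ancestor $T'$ satisfies $r_{T'}\le r_T\le 2^{-\ell/d}<2^{-(\ell-1)/d}$). (ii) The arithmetic inequality $\theta_\ell\ge 2^{-1/d}\theta_{\ell-1}$ holds: taking base-$2$ logarithms, it is equivalent to $2(\gamma-p-1)/(2p+d)\ge -1$, i.e.\ to $d\ge 2-2\gamma$, which is true because $d\ge 2$ and $\gamma>0$. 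In words: between two consecutive stages the required mesh size shrinks by at most the factor achieved in a single bisection.

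I would then prove by induction on $\ell$, for $0\le\ell\le d(K+1)-1$, the invariant $P(\ell)$: \emph{every cell of $\T_{\ell+1}^c$ contained in $\Omega_\ell$ has volume $\le\theta_\ell^d$}, where $\Omega_0:=\Omega$, so that $P(0)$ is exactly the last assertion of \lemmaref{1loop} ($|T|<\delta^d=\theta_0^d$ for all $T\in\T_1^c$). For the step $P(\ell-1)\Rightarrow P(\ell)$: let $T\in\T_\ell^c$ with $T\subseteq\Omega_\ell$; since $\Omega_\ell\subseteq\Omega_{\ell-1}$, the hypothesis $P(\ell-1)$ gives $|T|\le\theta_{\ell-1}^d$. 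If $h_T\le\theta_\ell$ then $T\notin\M_\ell$ and $|T|\le\theta_\ell^d$ already; if $h_T>\theta_\ell$ then $T\in\M_\ell$ is bisected, so each child $T'$ obeys $|T'|\le|T|/2\le\theta_{\ell-1}^d/2\le\theta_\ell^d$ by remark (ii). Since the subsequent call to \texttt{complete} only subdivides, every cell of $\T_{\ell+1}^c$ contained in $\Omega_\ell$ has volume $\le\theta_\ell^d$, which is $P(\ell)$.

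Finally I would read off the lemma. Given $T\in\T=\T_{d(K+1)}^c$ with $r_T<2^{-\ell/d}$ and $1\le\ell\le d(K+1)-1$, its $\T_\ell^c$-ancestor $T_\ell$ satisfies $r_{T_\ell}\le r_T<2^{-\ell/d}$, so $T_\ell\subseteq\Omega_\ell$; hence $T\subseteq\Omega_\ell$, and $T$ is contained in a cell of $\T_{\ell+1}^c$ lying inside $\Omega_\ell$, whose volume is $\le\theta_\ell^d$ by $P(\ell)$, giving $|T|\le\theta_\ell^d=\delta^d\,2^{2\ell(\gamma-p-1)/(2p+d)}$. The case $\ell=0$ is \lemmaref{1loop}, and the endpoint $\ell=d(K+1)$ is handled in the same way from $P(d(K+1)-1)$, using $\{T\in\T:r_T<2^{-(K+1)}\}\subseteq\Omega_{d(K+1)-1}$ together with the last marking step. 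I expect the inductive step to be the only real obstacle: the point to get right is that the \emph{single} bisection performed by \texttt{refine} at each stage suffices to keep pace with the shrinking threshold $\theta_\ell$ — this is precisely where remark (ii), hence $d\ge 2$ and $\gamma>0$, are used — the remainder being the bookkeeping of moving between the intermediate meshes $\T_\ell^c$ and the final mesh $\T$ along ancestor chains and of checking that the refinement possibly spread by \texttt{complete} outside $\Omega_\ell$ is harmless for an upper bound on element sizes.
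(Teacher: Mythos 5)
Your proof is correct and follows essentially the same route as the paper's: an induction over the stages of the second loop in which the ancestor of a cell near the singular set is shown to have been marked (or already small enough), combined with the arithmetic fact that a single bisection per stage keeps pace with the threshold $\delta\,2^{2\ell(\gamma-p-1)/(d(2p+d))}$ precisely because $\gamma>0$ and $d\ge2$; phrasing the invariant via containment in $\Omega_\ell$ rather than via $r_T<2^{-\ell/d}$ is an immaterial repackaging. The only loose end is the endpoint $\ell=d(K+1)$, where your chain (like the paper's own claim \eqref{g-size}, which also stops at $\ell<d(K+1)$) delivers the bound with exponent $\ell=d(K+1)-1$ rather than $d(K+1)$ --- a discrepancy of a fixed constant factor that is present in the original argument as well and harmless for the lemma's subsequent uses.
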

\begin{proof}
We first claim that for each $ 0 \leq \ell < d(K+1) $, the
following holds for the intermediate triangulations $\T_{\ell+1}$:
\begin{equation}\label{g-size}
T \in \T_{\ell + 1}^{c} \quad\text{and}\quad r_T <
2^{-\frac{\ell}{d}} \qquad\implies\qquad |T| < \delta^{d}
2^{\frac{2\ell (\gamma - p - 1)}{2p+d}}.
\end{equation}
We prove this by induction on $\ell$: By lemma~\ref{1loop} it holds
for $ \ell = 0 $. Before proceeding, observe that: if $ T' \in \T_{\ell}^{c} $ and $
T \in \T_{k}^{c} $ with $ k > \ell $:
\begin{equation}\label{prop-dist}
T \subset T'  \qquad\implies\qquad r_T \geq r_{T'}.
\end{equation}
Suppose now that~\eqref{g-size} holds for $ \ell $ and let us prove it
for $ \ell+1 $. If $ T \in \T_{\ell + 2}^{c} $ and $ r_T < 2^{-\frac{\ell+1}{d}} $, there exist $ T' \in \T_{\ell + 1}^{c} $ such that $ T \subset T' $, whence $ r_{T'} <
2^{-\frac{\ell}{d}} $, and by the inductive assumption $|T'| < \delta^{d}
2^{\frac{2 \ell (\gamma - p - 1)}{2p+d}} $. Now, if already $|T'| < \delta^{d} 2^{\frac{2 (\ell+1) (\gamma - p - 1)}{2p+d}} $ then the results
holds because $|T| \leq |T'|$. Otherwise, $ T' \in
\mathcal{M}_{\ell +1} $ and we have that 
\[
 |T| \leq \frac{1}{2}
|T'| < \frac{\delta^{d} 2^{\frac{2 \ell (\gamma - p - 1)}{2p+d}}}{2} < \delta^{d}  2^{\frac{2(\ell+1) (\gamma -
p - 1)}{2p+d}} 
\]
because $ \gamma > 0 $ and $d \geq 2$. Thus~\eqref{g-size} is proved for
$ \ell +1 $.

We now proceed to prove the claim of the lemma: Let $ T \in \T $ such that $
r_T < 2^{-\frac{\ell}{d}} $, then there exist $ T' \supset
T $, $ T' \in \T_{\ell +1}^{c} $ and then by \eqref{prop-dist}, $
r_{T'} < 2^{-\frac{\ell}{d}} $, and by~\eqref{g-size}, $ |T|
\leq |T'| < \delta^{d} 2^{\frac{2\ell (\gamma - p - 1)}{2p+d}} $.
\end{proof}

The claim of the previous lemma could have been achieved by simple uniform refinement, but this would have destroyed the complexity of the mesh. The next lemma shows that the number of marked elements in each iteration is reasonably bounded in a way that the overall complexity of the final mesh is under control.

\begin{lemma}\label{2loop}
There exists a constant $\mathbb{C}_2$, depending only on shape regularity, such that for $ 1 \leq \ell
< d(K+1) $:
\begin{equation}
\# \M_\ell = \# \T_{\ell + 1} - \# \T_{\ell}^{c} 
\leq \mathbb{C}_2 \, \delta^{-d} 2^{-\frac{\ell (2\gamma + d -2)}{2p+d}} .
\end{equation}
\end{lemma}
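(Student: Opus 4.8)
The plan is to count the marked elements $\M_\ell$ by partitioning them according to their distance to the singularity set $\N$, into dyadic "rings" around the points $x_i$, and then to bound the cardinality of each ring using a volume argument together with the lower bound on $h_T$ that membership in $\M_\ell$ forces. First I would recall the two facts that pin down every $T\in\M_\ell$: by definition of the algorithm, $T\subset\Omega_\ell$, so $r_T\le 2^{-\ell/d}$, and $h_T > \delta\,2^{\frac{2\ell(\gamma-p-1)}{d(2p+d)}}$, i.e.\ $|T| = h_T^d > \delta^d\,2^{\frac{2\ell(\gamma-p-1)}{2p+d}}$; this is a genuine lower bound on the element size. Combining this with Lemma~\ref{size} applied to $\T_\ell^c$ (or rather to the parent of $T$, using that $\M_\ell\subset\T_\ell^c$ and that elements of $\M_\ell$ with $r_T < 2^{-\ell'/d}$ must already satisfy $|T| < \delta^d 2^{\frac{2\ell'(\gamma-p-1)}{2p+d}}$ with $\ell' < \ell$) will show that an element of $\M_\ell$ cannot be too close to $\N$: since $\gamma-p-1<0$, the size lower bound is incompatible with $r_T$ being smaller than roughly $2^{-(\ell-1)/d}$, so in fact $2^{-(\ell-1)/d}\lesssim r_T\le 2^{-\ell/d}$, i.e.\ every marked element lives in a single dyadic annulus $A_\ell := \{ x : c\,2^{-\ell/d} \le r_x \le 2^{-\ell/d}\}$.

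Next I would estimate $\#\M_\ell$ by the volume bound. The annulus $A_\ell$ around each of the $N$ points $x_i$ has $d$-dimensional measure $\lesssim (2^{-\ell/d})^d = 2^{-\ell}$ (the union over $i$ just contributes the harmless factor $N$, absorbed into the constant allowed to depend on $u$ — but here we want a constant depending only on shape regularity, so I'd note $N$ is part of the data and keep it, or rather observe that the statement's $\mathbb{C}_2$ is claimed to depend only on shape regularity, so I must be slightly more careful and either absorb $N$ or note the ring count). Since the elements of $\M_\ell$ are disjoint (they all belong to the single mesh $\T_\ell^c$) and each has $|T| > \delta^d 2^{\frac{2\ell(\gamma-p-1)}{2p+d}}$, summing $|T|$ over $T\in\M_\ell$ gives
\[
\delta^d\, 2^{\frac{2\ell(\gamma-p-1)}{2p+d}}\,\#\M_\ell \;\le\; \sum_{T\in\M_\ell}|T| \;\le\; |A_\ell\cap\Omega| \;\lesssim\; 2^{-\ell},
\]
so that $\#\M_\ell \lesssim \delta^{-d}\,2^{-\ell}\,2^{-\frac{2\ell(\gamma-p-1)}{2p+d}} = \delta^{-d}\,2^{-\ell\left(1 + \frac{2(\gamma-p-1)}{2p+d}\right)}$. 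It remains to simplify the exponent: $1 + \frac{2(\gamma-p-1)}{2p+d} = \frac{2p+d + 2\gamma - 2p - 2}{2p+d} = \frac{2\gamma + d - 2}{2p+d}$, which is exactly the exponent in the statement. The identity $\#\M_\ell = \#\T_{\ell+1} - \#\T_\ell^c$ is immediate from the description of \texttt{refine}.

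The main obstacle I anticipate is the first step: making rigorous the claim that marked elements cannot be closer to $\N$ than $\sim 2^{-(\ell-1)/d}$. This requires carefully combining the size \emph{lower} bound defining $\M_\ell$ with the size \emph{upper} bound from Lemma~\ref{size} (applied at level $\ell$, i.e.\ to $\T_{\ell}^c = \T_{\ell-1+1}^c$, via \eqref{g-size}), and checking that the two are contradictory precisely when $r_T$ is small — i.e.\ verifying the arithmetic inequality $\delta^d 2^{\frac{2\ell'(\gamma-p-1)}{2p+d}} \le \delta^d 2^{\frac{2\ell(\gamma-p-1)}{d(2p+d)}\cdot\text{(something)}}$ goes the wrong way for $\ell' \le \ell - 1$, using $\gamma-p-1<0$. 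One should also double check that Lemma~\ref{dist} is not actually needed here (it would give $h_T\lesssim r_T$ but that is an \emph{upper} bound on $h_T$ in terms of distance, which combined with the annulus width gives $h_T \lesssim 2^{-\ell/d}$, consistent but not what drives the count) — the volume argument is the efficient route. Everything else is a routine dyadic summation that, notably, is not even needed for this lemma (the geometric sum over $\ell$ is deferred to the next lemma that assembles \eqref{bdd}).
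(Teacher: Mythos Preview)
Your volume argument and the exponent simplification are both correct, and they are exactly the paper's mechanism. The gap is the step confining $\M_\ell$ to a thin annulus $A_\ell$: that claim is false, and it is precisely the place you flagged as the ``main obstacle.'' Applying \eqref{g-size} to $T\in\M_\ell\subset\T_\ell^c=\T_{(\ell-1)+1}^c$ yields only the upper bound $|T|<\delta^d\,2^{\frac{2(\ell-1)(\gamma-p-1)}{2p+d}}$; since $\gamma-p-1<0$, this upper bound \emph{exceeds} the marking lower bound $|T|>\delta^d\,2^{\frac{2\ell(\gamma-p-1)}{2p+d}}$ by the fixed factor $2^{\frac{2(p+1-\gamma)}{2p+d}}>1$, so the two are perfectly compatible and nothing forces $r_T\gtrsim 2^{-\ell/d}$. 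Concretely, an element touching a singular vertex ($r_T=0$) with size just below $\delta^d\,2^{\frac{2(\ell-1)(\gamma-p-1)}{2p+d}}$ lies in $\M_\ell$, and more generally marked elements can sit anywhere inside $\Omega_\ell$, not only in a thin shell. (The dyadic-ring picture from the heuristic section applies to the \emph{final} mesh $\T_{d(K+1)}^c$, not to the marked set at an intermediate step.) You cannot invoke \eqref{g-size} with indices $\ge\ell$ either, because $T$ is bisected at step $\ell$ and does not survive into $\T_{\ell+1}^c$.

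The repair is simpler than the annulus detour: since every $T\in\M_\ell$ satisfies $T\subset\Omega_\ell$ by definition, replace $|A_\ell|$ by $|\Omega_\ell|$ in your displayed inequality. What is then needed is $|\Omega_\ell|\lesssim 2^{-\ell}$, and this is exactly where Lemma~\ref{dist} is essential, contrary to your closing remark that it is ``not actually needed.'' Lemma~\ref{dist} gives $\diam(T)\lesssim r_T\le 2^{-\ell/d}$ for every $T\in\T_\ell^c$ with $0<r_T\le 2^{-\ell/d}$, so such $T$ lie in $\bigcup_i B(x_i,C\,2^{-\ell/d})$, a set of measure $\lesssim 2^{-\ell}$; the finitely many star-elements with $r_T=0$ are absorbed by shape regularity. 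With $|\Omega_\ell|\lesssim 2^{-\ell}$ in hand, your computation goes through verbatim and coincides with the paper's proof.
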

\begin{proof}
Recall that in the algorithm we define $ \Omega_{\ell} = \bigcup
\{ T \mid T\in \T_{\ell}^c \ \wedge \ r_T \leq
2^{-\frac{\ell}{d}} \} $, and since $\T_{\ell+1}$ is obtained from $\T_\ell^c$ by refinement only, we have that $\Omega_\ell = \bigcup
\{ T \mid T\in \T_{\ell+1} : T \subset \Omega_\ell \} $, whence
\[
| \Omega_{\ell} | 
= \sum_{T \in \T_{\ell +1},\ T \subset \Omega_{\ell} } h_T^d 
= \sum_{T \in \T_{\ell +1} \backslash \T_{\ell}^c } h_T^d 
   + \sum_{\substack{T \in \T_{\ell +1} \cap \T_{\ell}^c\\ \ T \subset \Omega_{\ell} }} h_T^d 
\geq \sum_{T \in \T_{\ell +1} \backslash \T_{\ell}^c } h_T^d .
\]
But if $T \in \T_{\ell +1} \backslash \T_{\ell}^c$, then $T$ is half of an element $T' \in \M_\ell$, and thus by the definition of $\M_\ell$ in the algorithm,
\[
 2 \, h_T^d = h_{T'}^d \ge \delta^{d} 2^{\frac{2\ell (\gamma -p-1)}{2p+d}},
\]
which in turn implies that
\[
| \Omega_{\ell} | \geq
\frac{\delta^{d}  2^{\frac{2\ell ( \gamma - p -1 )}{2p+d}}  }{2} ( \#
\T_{\ell +1} - \# \T_{\ell}^c ) \\
=  \frac{ \delta^{d} \, 2^{-\ell}\, 2^{\frac{\ell (2\gamma + d -2)}{2p+d}}}{2} ( \#
\T_{\ell +1} - \# \T_{\ell}^c ).
\]
By lemma \ref{dist}  we have that $| \Omega_{\ell} | \leq \CC 2^{-\ell} $ and then:
\[
\# \T_{\ell + 1} - \# \T_{\ell}^{c}  
 \leq 2 \, | \Omega_{\ell} | \, 2^{\ell} \,
\delta^{-d} 2^{-\frac{\ell (2\gamma + d -2)}{2p+d}} 
\leq \mathbb{C}_2 \, \delta^{-d} 2^{-\frac{\ell (2\gamma + d -2)}{2p+d}} ,
\]
and the lemma is proved.
\end{proof}

The next lemma makes use of the complexity result~(\ref{bdd}) of the completion procedure for the newest-vertex bisection rule, to bound the complexity of the final mesh.

\begin{lemma}\label{complexity}
There exists a constant $\mathbb{C}_3$, depending only on shape regularity, the polynomial degree $p$, the dimension $d$, the function $u$ through $\gamma$, and $\T_0$, such that:
\begin{equation}
\# \T_{d(K + 1)}^{c} - \# \T_{0}  \leq \mathbb{C}_3 \, \delta^{-d}.
\end{equation}
\end{lemma}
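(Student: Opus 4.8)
The plan is to feed the per-stage bounds from Lemmas~\ref{1loop} and~\ref{2loop} into the spreading estimate~\eqref{bdd}, and then to observe that the contribution of the second loop is a geometric series whose sum is bounded independently of the number of stages $d(K+1)$, hence independently of $\delta$.

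Concretely, I would start from~\eqref{bdd}, which is Theorem~\ref{T:bdd} applied to the sequence of \texttt{refine}/\texttt{complete} calls made by the algorithm (first in the initial loop, then in the selective loop); the constant $\mathcal{C}$ there depends only on $\T_0$. This reduces the claim to estimating
\[
S_1 := \sum_{j=0}^{J-1}\big(\# \T_{0,j+1} - \# \T_{0,j}^{c}\big)
\qquad\text{and}\qquad
S_2 := \sum_{\ell=1}^{d(K+1)-1}\big(\# \T_{\ell+1} - \# \T_{\ell}^{c}\big).
\]
For $S_1$, Lemma~\ref{1loop} gives at once $S_1 \le \CC_1\,\delta^{-d} = 2|\Omega|\,\delta^{-d}$. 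For $S_2$, Lemma~\ref{2loop} applied term by term yields
\[
S_2 \;\le\; \mathbb{C}_2\,\delta^{-d}\sum_{\ell=1}^{d(K+1)-1} 2^{-\frac{\ell(2\gamma+d-2)}{2p+d}}
\;\le\; \mathbb{C}_2\,\delta^{-d}\sum_{\ell=1}^{\infty} q^{\ell},
\qquad q := 2^{-\frac{2\gamma+d-2}{2p+d}}.
\]
Since $\gamma>0$ and $d\ge 2$ we have $2\gamma+d-2>0$, so $0<q<1$ and $\sum_{\ell\ge1} q^{\ell} = q/(1-q)$ is a finite constant depending only on $\gamma$, $p$ and $d$; in particular it is independent of $K$, hence of $\delta$. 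Therefore $S_2 \le \mathbb{C}_2\,\frac{q}{1-q}\,\delta^{-d}$, and combining this with the bound on $S_1$ and with~\eqref{bdd} gives
\[
\# \T_{d(K+1)}^{c} - \# \T_0 \;\le\; \mathcal{C}\,(S_1+S_2) \;\le\; \mathbb{C}_3\,\delta^{-d},
\]
with $\mathbb{C}_3$ depending only on shape regularity (through $\mathbb{C}_2$ and $\mathcal{C}$), on $\Omega$, on $p$, on $d$, on $u$ through $\gamma$, and on $\T_0$.

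I do not expect a real obstacle here: the only point requiring a moment's care is the strict positivity of the exponent $2\gamma+d-2$ appearing in Lemma~\ref{2loop}, which is exactly what makes the sum over the (unboundedly many, as $\delta\to0$) stages of the second loop converge to a constant rather than producing an extra factor growing like $d(K+1)\simeq\log(1/\delta)$. It is here that the choice $\gamma=\tfrac12\min_i\gamma_i>0$, together with the geometric shrinking of the radii $2^{-\ell/d}$ of the regions $\Omega_\ell$, pays off.
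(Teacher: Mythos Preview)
Your proposal is correct and follows essentially the same approach as the paper: combine~\eqref{bdd} with Lemmas~\ref{1loop} and~\ref{2loop}, and use $\gamma>0$ (together with $d\ge2$) to ensure the geometric series $\sum_{\ell\ge1} 2^{-\ell(2\gamma+d-2)/(2p+d)}$ converges to a constant independent of $K$ and $\delta$. Your write-up is in fact slightly more explicit than the paper's, since you identify the ratio $q$ and note the role of $d\ge2$ in the positivity of the exponent.
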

\begin{proof}
Using~(\ref{bdd}), lemmas~\ref{1loop} and~\ref{2loop} we have that
\begin{align*}
\# \T_{d(K + 1)}^{c} - \# \T_{0} 
&\leq \C \bigg( \sum_{\ell =1}^{d(K+1)-1} ( \# \T_{\ell + 1} - \# \T_{\ell}^c )
+ \sum_{j=0}^{J-1} (\# \T_{0,j+1} - \# \T_{0,j}^{c}) \bigg) \\
&\leq \C \bigg( \sum_{\ell =1}^{d(K+1)-1} 
       \CC_2  \delta^{-d} 2^{\frac{- \ell (2\gamma +d-2)}{2p+d}} 
     + \CC_1  \delta^{-d} \bigg)
\\ &\leq \C \delta^{-d}
\bigg(\CC_2 \sum_{\ell =1}^{\infty}   2^{\frac{- \ell (2\gamma +d-2)}{2p+d}} + \CC_1 \bigg). 
\end{align*}
Since $\gamma > 0$ the sum $ \sum_{\ell =1}^{\infty}   2^{\frac{- \ell (2\gamma +d-2)}{2p+d}}$ is finite, and the claim follows taking $\CC_3 = \C \bigg(\CC_2 \sum_{\ell =1}^{\infty}   2^{\frac{- \ell (2\gamma +d-2)}{2p+d}} + \CC_1 \bigg)$.
\end{proof}

\section{Error}\label{S:error}

In this section we bound the best error with finite element functions in terms of the complexity of the mesh:

\begin{theorem}\label{T:error}
There exist two constants $\mathbf{A}_1$, $\mathbf{A}_2$, that may depend on $u$ through the broken seminorm 
$|u_0|_{H^{p+1}_{\T_0}(\Omega)} := \left(\sum_{T\in\T_0} \|D^{p+1} u_0\|^2_{L^2(T)}\right)^{1/2}$, $c_i$, $k_i$, $\gamma_i$, the $\|\chi_i\|_{W^{p+1}_\infty(\Omega)}$, $\|g_i\|_{ W^{1}_\infty(\Omega)}$, and the $ W^{p+1}_\infty(S) $-norm of $g_i$, $ S \in \PP_i $, $i=1,\dots,N$, the polynomial degree $p$, the dimension $d$, shape regularity and $\T_0$, but otherwise independent of $K$ and $ \delta $, such that, if $ \T = \T_{d(K+1)}^{c} $, then
\begin{align}
    \inf_{u_{\T} \in \V_{\T}} \norma{u-u_{\T}}_{1, \Omega} 
            &\leq \mathbf{A}_1 \delta^{p},   \\
    \inf_{u_{\T} \in \V_{\T}} \norma{u-u_{\T}}_{1, \Omega} 
            &\leq \mathbf{A}_2 ( \# \T - \# \T_0)^{-\frac{p}{d}}.
\end{align}
\end{theorem}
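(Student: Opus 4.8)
The plan is to estimate the best finite element error on $\T=\T_{d(K+1)}^c$ by a Lagrange interpolation (or Scott–Zhang quasi-interpolation, to handle the elements touching $\N$) estimate element by element, splitting the triangulation into the elements that touch a singular point and the rest. On each element $T$ with $T\cap\N=\emptyset$ we use the standard polynomial interpolation estimate $\norma{u-\IT u}_{1,T}\lesssim h_T^{p}\,\norma{D^{p+1}u}_{L^2(T)}$, which applies since by Remark~\ref{R:main-result} each $u_i$ is in $H^{p+1}(T)$ there; on the finitely many elements $T$ with $x_i\in T$ we instead bound $\norma{u-\IT u}_{1,T}\lesssim \norma{u}_{1,T}$ (or use that a quasi-interpolant reproduces means) and control this by the bound $|\grad u_i|\le C r_i^{\gamma-1}$ together with $|u_i|\le Cr_i^\gamma$ from~\eqref{C-singamma}, getting $\norma{u_i}_{1,T}^2\lesssim \int_0^{h_T} r^{2\gamma-2}r^{d-1}\,dr\lesssim h_T^{2\gamma+d-2}$. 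The regular part $u_0$ contributes $\sum_{T\in\T}h_T^{2p}\norma{D^{p+1}u_0}^2_{L^2(T)}\le (\max_T h_T)^{2p}\,|u_0|_{H^{p+1}_{\T_0}}^2\lesssim \delta^{2p}|u_0|^2_{H^{p+1}_{\T_0}}$, since Lemma~\ref{1loop} gives $h_T\le\delta$ for every $T\in\T_1^c$, hence for every $T\in\T$.

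The heart of the argument is the singular part. Fix $i$ and organize the elements by dyadic distance to $x_i$: for $0\le k< K$ let $D_k=\{T\in\T: 2^{-(k+1)}\le r_T<2^{-k}\}$ (measured via $r_T=\dist(x_i,T)$, up to the unification by $\N$), and let $D_K$ collect the elements with $r_T<2^{-K}$ together with the ones containing $x_i$. For $T\in D_k$, $k<K$, the interpolation estimate and~\eqref{C-singamma} give $\norma{u_i-\IT u_i}_{1,T}^2\lesssim h_T^{2p}\,r_T^{2(\gamma-p-1)}\,|T|\simeq h_T^{2p+d}r_T^{2(\gamma-p-1)}$. Lemma~\ref{size} (applied with the level $\ell$ for which $2^{-(\ell+1)/d}\le r_T<2^{-\ell/d}$, i.e.\ $\ell\approx kd$) bounds $h_T^{2p+d}\le|T|^{(2p+d)/d}< \delta^{2p+d}2^{2\ell(\gamma-p-1)}$, which is exactly engineered so that $h_T^{2p+d}r_T^{2(\gamma-p-1)}\lesssim \delta^{2p+d}2^{2\ell(\gamma-p-1)}2^{-2\ell(\gamma-p-1)/?}$ — more precisely, combining with $r_T\simeq 2^{-\ell/d}$ yields the key cancellation $h_T^{2p+d}r_T^{2(\gamma-p-1)}\simeq \delta^{2p+d}2^{-\ell(2\gamma+d-2)/?}$, uniform over the ring. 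Summing over $T\in D_k$ using $\#D_k\lesssim |\Omega_\ell|/(h_T^d)$ à la Lemma~\ref{2loop}, and then summing the resulting geometric series in $k$ (convergent because $\gamma>0$), produces $\sum_{k<K}\sum_{T\in D_k}\norma{u_i-\IT u_i}_{1,T}^2\lesssim \delta^{2p}$. For the innermost region $D_K$: elements not containing $x_i$ there have $r_T<2^{-K}$ so by Lemma~\ref{size} $|T|<\delta^{d}2^{2K(\gamma-p-1)/(2p+d)}$, while the choice of $K$ in~\eqref{K} makes $2^{-K(2\gamma+d-2)/(2p+d)}\simeq\delta$; feeding these into $\sum_{T} h_T^{2p+d}r_T^{2(\gamma-p-1)}$ and into $\sum_T h_T^{2\gamma+d-2}$ (for the $O(1)$ elements containing $x_i$, plus those near it, using that the total measure near $x_i$ within radius $2^{-K}$ is $\lesssim 2^{-Kd}$) shows each such piece is also $\lesssim\delta^{2p}$. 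Adding the finitely many contributions $i=0,1,\dots,N$ gives the first inequality $\inf_{u_\T}\norma{u-u_\T}_{1,\Omega}\le\mathbf A_1\delta^p$.

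The second inequality follows by combining the first with Lemma~\ref{complexity}: $\#\T-\#\T_0\le \mathbb C_3\,\delta^{-d}$, hence $\delta\ge (\mathbb C_3/( \#\T-\#\T_0))^{1/d}$ is false in the wrong direction — rather, $\delta^{-d}\ge (\#\T-\#\T_0)/\mathbb C_3$, so $\delta^p\le \mathbb C_3^{p/d}(\#\T-\#\T_0)^{-p/d}$, and we set $\mathbf A_2=\mathbf A_1\mathbb C_3^{p/d}$. The main obstacle I anticipate is bookkeeping: (i) matching the continuous dyadic index $k$ with the algorithm's integer level $\ell$ (there is a factor $d$ and an off-by-one in $2^{-\ell/d}$ vs.\ $2^{-k}$, so one must check the rings are swept by actual iterations $\ell< d(K+1)$ and that Lemma~\ref{size} applies at the right $\ell$), and (ii) verifying the arithmetic that the exponents collapse so the sum over rings is geometric with ratio $2^{-(2\gamma+d-2)/(2p+d)}<1$ precisely when $\gamma>0$ — this is the same exponent appearing in Lemma~\ref{2loop} and in~\eqref{K}, which is reassuring. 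Handling the elements that actually contain a singular vertex requires only that the interpolation operator be stable in $H^1$ there (Scott–Zhang), since $u$ is merely $H^{1+\epsilon}$, not $H^{p+1}$, in their neighborhood.
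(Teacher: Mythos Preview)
Your outline matches the paper's proof closely: split $u=\sum_i u_i$, interpolate each piece, bound $u_0$ by $h_T\le\delta$, and treat each singular $u_i$ via dyadic rings plus an innermost region. The paper carries this out with the Lagrange interpolant throughout (it is well defined because each $u_i$ is continuous, cf.\ Remark~\ref{R:main-result}); on the elements containing $x_i$ it bounds $|I_\T u_i|_{1,T}$ directly via $\|I_\T u_i\|_{L^\infty(T)}\le C h_T^\gamma$ and scaling, so Scott--Zhang is not needed.

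There is, however, one genuine confusion in your ring computation that you should fix. The exponents cancel \emph{completely}, not to a geometric ratio. From Lemma~\ref{size}, if $T\subset D_\ell$ (the paper indexes rings by $\ell=0,\dots,d(K+1)-1$ with $r_T\in(2^{-(\ell+1)/d},2^{-\ell/d}]$, matching the algorithm) then
\[
h_T^{2p+d}=|T|^{(2p+d)/d}<\delta^{2p+d}\,2^{\frac{2\ell(\gamma-p-1)}{d}},
\qquad
r_T^{2(\gamma-p-1)}\simeq 2^{-\frac{2\ell(\gamma-p-1)}{d}},
\]
so $h_T^{2p+d}r_T^{2(\gamma-p-1)}\lesssim\delta^{2p+d}$ uniformly in $\ell$ and $T$. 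There is no leftover factor $2^{-\ell(2\gamma+d-2)/(2p+d)}$ here; that geometric ratio lives on the \emph{complexity} side (Lemma~\ref{2loop}), not the error side. Consequently the ring sum is not geometric: you simply get
\[
\sum_{\ell}\sum_{T\subset D_\ell}|u_i-I_\T u_i|_{1,T}^2
\lesssim \delta^{2p+d}\sum_{\ell}\#D_\ell
=\delta^{2p+d}\,\#\T,
\]
and then invoke Lemma~\ref{complexity} together with $\#\T_0\le\delta^{-d}$ to obtain $\#\T\lesssim\delta^{-d}$ and hence $\delta^{2p+d}\#\T\lesssim\delta^{2p}$. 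Your plan to bound $\#D_k$ individually via $|\Omega_\ell|/h_T^d$ would require a \emph{lower} bound on $h_T$ in the final mesh, which is not available because completion may over-refine; the paper avoids this entirely by using the total count. The innermost region contributes $\lesssim 2^{-(K+1)(2\gamma+d-2)}\le\delta^{2p+d}$ by the choice of $K$ in~\eqref{K}, as you anticipated; and your derivation of the second inequality from the first via Lemma~\ref{complexity} is correct.
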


In order to prove this theorem we will consider the regular part $u_0$ of $u$ and the singular part given by $\sum_{i=1}^ N u_i$.

Throughout this section we will use the Lagrange interpolator $\IT u_i$ of $u_i$, which is the finite element function that coincides with $u_i$ at all the nodes, and is well defined for each $i=0,1,\dots,N$, since by the assumptions of theorem~\ref{T:bdd}, all the $u_i$ functions are continuous in $ \overline{\Omega} $; see remark~\ref{R:main-result}.

\subsection{Estimation of the Regular Part}

\begin{theorem}\label{T:est-reg}
There exist two constants $\mathbb{C}_4$, $\mathbb{C}_5$, depending on the broken seminorm 
$|u_0|_{H^{p+1}_{\T_0}(\Omega)} := \left(\sum_{T\in\T_0} \|D^{p+1} u_0\|^2_{L^2(T)}\right)^{1/2}$, the polynomial degree $p$, shape regularity and $\T_0$, but otherwise independent of $K$ and $\delta$, such that, if $ \T = \T_{d(K+1)}^{c} $, then
\begin{align*}
  \seminorma{u_0-\IT u_0}_{1, \Omega} &\leq \mathbb{C}_4 \delta^{p} ,
\\
  \seminorma{u_0-I_{\T}u_0}_{1, \Omega} &\leq \mathbb{C}_5 ( \# \T - \# \T_0)^{-\frac{p}{d}} .
\end{align*}
\end{theorem}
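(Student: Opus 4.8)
The plan is to prove Theorem~\ref{T:est-reg} by exploiting that $u_0$ is piecewise $H^{p+1}$ over the initial mesh $\T_0$ combined with the uniform size bound obtained in the first loop of the algorithm. First I would observe that by Lemma~\ref{1loop}, every element $T\in\T_1^c$ satisfies $|T|<\delta^d$, hence $h_T<\delta$; moreover, since all subsequent refinements only subdivide elements, every $T\in\T=\T_{d(K+1)}^c$ also satisfies $h_T<\delta$, and every such $T$ is contained in a unique element of $\T_0$ (the jumps of $\grad u_0$, which exist only along edges of $\T_0$, never fall in the interior of a $T\in\T$). Consequently $u_0|_T\in H^{p+1}(T)$ for every $T\in\T$.

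The next step is a local Bramble--Hilbert / polynomial-interpolation estimate on each element: on a shape-regular simplex $T$ of diameter $h_T$, for a function $v\in H^{p+1}(T)$ one has the standard bound
\[
  \seminorma{v-I_T v}_{1,T} \lesssim h_T^{p}\, \|D^{p+1}v\|_{L^2(T)},
\]
with a constant depending only on $p$, $d$ and the shape-regularity constant (which is uniform over all meshes produced by newest-vertex bisection). Applying this with $v=u_0$, using $h_T<\delta$, and summing the squares over $T\in\T$ gives
\[
  \seminorma{u_0-\IT u_0}_{1,\Omega}^2
  \lesssim \sum_{T\in\T} h_T^{2p}\|D^{p+1}u_0\|_{L^2(T)}^2
  \le \delta^{2p}\sum_{T\in\T}\|D^{p+1}u_0\|_{L^2(T)}^2
  = \delta^{2p}\, |u_0|_{H^{p+1}_{\T_0}(\Omega)}^2,
\]
the last equality because the $D^{p+1}u_0$ contributions from all sub-elements of a fixed $T_0\in\T_0$ add up to the contribution of that $T_0$. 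Taking square roots yields the first inequality with $\mathbb{C}_4 \simeq |u_0|_{H^{p+1}_{\T_0}(\Omega)}$ times a shape-regularity constant. The second inequality then follows immediately by substituting the complexity bound of Lemma~\ref{complexity}, $\#\T-\#\T_0\le\mathbb{C}_3\,\delta^{-d}$, which gives $\delta^{p}\le(\mathbb{C}_3)^{p/d}(\#\T-\#\T_0)^{-p/d}$, so one may take $\mathbb{C}_5=\mathbb{C}_4\,\mathbb{C}_3^{p/d}$.

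The only genuinely delicate point is the claim that no interior of an element $T\in\T$ straddles a jump of $\grad u_0$: this rests on the assumption in Theorem~\ref{T:main-result} that such jumps are aligned with edges of $\T_0$, together with the fact that newest-vertex bisection refines conformingly so that any $T\in\T$ lies in the closed interior of exactly one $T_0\in\T_0$ and its relative interior meets no edge of $\T_0$. I would state this explicitly as it is what makes the local $H^{p+1}$ regularity — and hence the Bramble--Hilbert estimate — legitimate on each $T$. Everything else is the routine elementwise interpolation estimate plus the summation and the already-established complexity bound; no further subtlety is expected.
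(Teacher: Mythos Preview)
Your proposal is correct and follows essentially the same route as the paper: piecewise $H^{p+1}$ regularity of $u_0$ over $\T_0$ is inherited by every $T\in\T$, the standard elementwise interpolation estimate together with $h_T<\delta$ (from Lemma~\ref{1loop}) gives the $\delta^p$ bound, and Lemma~\ref{complexity} converts this into the $(\#\T-\#\T_0)^{-p/d}$ bound. One minor remark: the ``delicate point'' you flag about jumps of $\grad u_0$ is already built into the hypothesis $u_0|_T\in H^{p+1}(T)$ for all $T\in\T_0$, so no separate jump-alignment argument is needed for the regular part (item~(g) in Theorem~\ref{T:main-result} concerns the singular terms $u_i$, $i\ge1$).
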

\begin{proof}
Since $u_0\vert_T \in H^{p+1}(T)$ for all $T\in \T_0$, and $\T$ was obtained only by refinement, $u_0\vert_T \in H^{p+1}(T)$ for all $T\in \T$, and standard interpolation estimates~\cite{Ciarlet} yield
\begin{align*}
|u_0 - I_{\T} u_0|_{1, \Omega}^2 &= \sum_{T \in \T} |u_0 -
I_{\T}u_0|_{1,T}^2 \lesssim \sum_{T \in \T} h_T^{2p} \|D^{p+1} u_0
\|_{L^2(T)}^2 
\\ &\leq \delta^{2p} \left| u_0 \right|_{H^{p+1}_{\T_0}(\Omega)}^2 ,
\end{align*}
where the last inequality is a consequence of lemma~\ref{1loop} and the first loop of the algorithm.

Then, by lemma~\ref{complexity}
\[ 
|u_0 - I_{\T} u_0|_{1,\Omega} \le \CC_4 \delta^{p} 
\le \CC_5 (\# \T  -\# \T_0 )^{-\frac{p}{d}},
\] 
and the theorem is proved.
\end{proof}

\subsection{Estimation of the Singular Part}

Throughout this section, we will denote with $u$ one of the singular terms $u_i$ defining $u$ in~(\ref{split}), that is, it is defined in polar coordinates around a point $x_i$ in $\overline{\Omega}$ as
\begin{equation}\label{def-u-sing}
u = c_i \, \big(\ln(r_i)\big)^{k_i} \, r_i^{\gamma_i} \, g_i( \overrightarrow{\theta_i} ) \, \chi_i ,
\end{equation}
for some $i=1,2,\dots,N$ and $c_i$, $r_i$, $k_i$, $\gamma_i$, $g_i$, $\overrightarrow{\theta_i}$, $\chi_i$ as in the assumptions of theorem~\ref{T:main-result}.

The three bounds of~(\ref{C-singamma}) are the only features of $u$ that will be used in the proof of the following theorem.


\begin{theorem}\label{T:est-sing}
There exist two constants $\mathbb{C}_6$, $\mathbb{C}_7$, that depend on the parameters defining $u$ in~\eqref{def-u-sing}, shape regularity and $\T_0$, but otherwise independent of $K$ and $\delta$, such that, if $ \T = \T_{d(K+1)}^{c} $, then
\begin{align*}
 \seminorma{u-I_{\T}u}_{1, \Omega} &\leq \mathbb{C}_6 \delta^{d} ,
\\
 \seminorma{u-I_{\T}u}_{1, \Omega} &\leq \mathbb{C}_7 ( \# \T - \# \T_0 )^{-\frac{p}{d}} .
\end{align*}
\end{theorem}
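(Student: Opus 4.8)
I would split the error on three zones determined by distance to the singular set $\N$, matching the geometry baked into the algorithm and into Lemma~\ref{size}: an innermost zone of elements touching the singularity, a sequence of dyadic annuli $2^{-(\ell+1)/d}\le r_T<2^{-\ell/d}$ away from $\N$, and the far field. On each element $T$ away from the singularity, $u|_T\in H^{p+1}(T)$ and the standard interpolation estimate gives $|u-\IT u|_{1,T}\lesssim h_T^{p}\|D^{p+1}u\|_{L^2(T)}$; combined with the bound $|D^{p+1}u|\le C r_T^{\gamma-p-1}$ from~\eqref{C-singamma} and $\|D^{p+1}u\|_{L^2(T)}^2\lesssim r_T^{2(\gamma-p-1)}h_T^{d}$, this yields $|u-\IT u|_{1,T}^2\lesssim h_T^{2p}r_T^{2(\gamma-p-1)}h_T^{d}$. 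On the elements with $r_T=0$ (or $r_T\lesssim h_T$) one cannot interpolate across the singularity, so instead I bound $|u-\IT u|_{1,T}\lesssim |u|_{1,T}+|\IT u|_{1,T}$, estimate $|u|_{1,T}^2\lesssim\int_{r<h_T}r^{2(\gamma-1)}r^{d-1}\,dr\lesssim h_T^{2\gamma+d-2}$ using $|\grad u|\le C r^{\gamma-1}$, and control the interpolant term by $\|u\|_{L^\infty}$ on the patch together with an inverse estimate, giving a bound of the same order $h_T^{2\gamma+d-2}$.

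**Putting the zones together.** For the far field, Lemma~\ref{1loop} gives $|T|<\delta^d$, hence $h_T\le\delta$, and there are $\lesssim\delta^{-d}$ such elements, contributing $\lesssim\delta^{-d}\cdot\delta^{2p}\cdot(\text{const})$ — wait, more carefully: summing $h_T^{2p}r_T^{2(\gamma-p-1)}h_T^d$ over the far field where $r_T\gtrsim 1$ and $h_T\le\delta$ gives $\lesssim\delta^{2p}\sum_T h_T^d\lesssim\delta^{2p}|\Omega|$, so of order $\delta^{2p}$. For an annulus at level $\ell$, Lemma~\ref{size} gives $h_T^d<\delta^d 2^{2\ell(\gamma-p-1)/(2p+d)}$ and there are $\lesssim 2^{-\ell}\big/\big(\delta^d 2^{2\ell(\gamma-p-1)/(2p+d)}\big)$ elements (area $\simeq 2^{-\ell}$ divided by element area, as in Lemma~\ref{2loop}); plugging into $\sum h_T^{2p}r_T^{2(\gamma-p-1)}h_T^d$ with $r_T\simeq 2^{-\ell/d}$ and simplifying the exponents, each annulus contributes $\lesssim\delta^{2p}\,2^{-c\ell}$ for some $c>0$ precisely because $\gamma>0$ — this is the calibration that the exponents $K$ in~\eqref{K} and the marking threshold $\delta\,2^{2\ell(\gamma-p-1)/(d(2p+d))}$ were designed to produce. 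Summing the geometric series over $0\le\ell<d(K+1)$ gives $\lesssim\delta^{2p}$. For the innermost elements ($r_T<2^{-(d(K+1))/d}=2^{-(K+1)}$), Lemma~\ref{size} bounds their size, $h_T^{2\gamma+d-2}$ is controlled by the corresponding power of $\delta$ via~\eqref{K}, and there are only $O(1)$ of them near each $x_i$ (shape regularity plus the fact that $x_i$ is a vertex), so they contribute $\lesssim\delta^{2p}$ as well. Adding the three zones: $|u-\IT u|_{1,\Omega}^2\lesssim\delta^{2p}$, i.e.\ the first inequality with $\CC_6\delta^p$; actually the statement claims $\delta^d$, so the sharper zone-by-zone accounting must show the total is $\lesssim\delta^{\min(2p,\,?)}$ and in fact $\delta^{2p}\le\delta^{2d}\cdot(\cdots)$ — I would track the exponent carefully and invoke $p\ge 1$, $d\le 3$ to land on the stated form. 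Finally, Lemma~\ref{complexity} gives $\#\T-\#\T_0\lesssim\delta^{-d}$, hence $\delta^p\lesssim(\#\T-\#\T_0)^{-p/d}$, yielding the second inequality with $\CC_7$.

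**The main obstacle.** The delicate point is the bookkeeping at the scale $\ell=d(K+1)$ and the innermost elements: one must verify that the algorithm's stopping level is deep enough that the accumulated error from the un-refined core is still $O(\delta^{2p})$, and that the annulus sum genuinely telescopes — the exponent $2\gamma+d-2$ appearing in~\eqref{K} and in Lemma~\ref{2loop} must stay positive, which is where $d\ge 2$ and $\gamma>0$ are used, and the per-annulus error exponent must come out to exactly cancel the per-annulus cardinality so that the common ratio of the geometric series is $<1$. A secondary technical nuisance is the interpolation estimate on elements meeting the singularity, where $u\notin H^2$ locally: handling $|\IT u|_{1,T}$ there requires care (an $L^\infty$ bound on $u$, which holds by the Sobolev embedding noted in Remark~\ref{R:main-result}, plus an inverse inequality, or a direct estimate of the nodal values), and the jump-alignment hypothesis is what guarantees $\IT u$ is well defined and $u|_T\in H^{p+1}$ away from $\N$ even across the cutoff and the mesh-aligned gradient jumps.
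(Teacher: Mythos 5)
Your overall strategy is the paper's: dyadic regions around $x_i$, the interpolation estimate $|u-\IT u|^2_{1,T}\lesssim h_T^{2p}\|D^{p+1}u\|^2_{L^2(T)}$ combined with $\|D^{p+1}u\|^2_{L^2(T)}\lesssim C\,\dist(x_i,T)^{2(\gamma-p-1)}h_T^d$ away from the singularity, a direct bound $|u|_{1}+|\IT u|_{1}$ (with an $L^\infty$/reference-element argument for the interpolant) on the innermost region, and Lemma~\ref{complexity} to convert $\delta^p$ into $(\#\T-\#\T_0)^{-p/d}$. But two of your load-bearing counting steps are not justified and are not how the paper closes the argument. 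First, your per-annulus cardinality bound $\#D_\ell\lesssim 2^{-\ell}/\bigl(\delta^d 2^{2\ell(\gamma-p-1)/(2p+d)}\bigr)$ uses Lemma~\ref{size} as if it were a \emph{lower} bound on element size; it is only an upper bound, so elements in $D_\ell$ may be much smaller and more numerous, and the geometric series over $\ell$ you describe cannot be set up this way. The paper does not sum a geometric series in the error at all: the calibration makes the \emph{per-element} contribution $h_T^{2p+d}r_T^{2(\gamma-p-1)}\lesssim\delta^{2p+d}$ uniformly in $\ell$ (the powers of $2^{\ell}$ cancel exactly), and then one simply multiplies by $\#\T\lesssim\delta^{-d}+\#\T_0\lesssim\delta^{-d}$ from Lemma~\ref{complexity}; the geometric series lives only in the complexity estimate (Lemma~\ref{2loop}), not in the error estimate. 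Second, your claim that the innermost region $r_T<2^{-(K+1)}$ contains only $O(1)$ elements is likewise unjustified (only the elements having $x_i$ as a vertex are $O(1)$ in number); the paper instead bounds $|u|^2_{1}$ and the contribution of the elements not touching $x_i$ by integrals over the ball of radius $\simeq 2^{-(K+1)}$, obtaining $C\,2^{-(K+1)(2\gamma+d-2)}\le C\delta^{2p+d}$ via~\eqref{K}, and uses the $O(1)$ count only for the elements actually containing $x_i$.

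One further clarification: you are right to be puzzled by the exponent in the first inequality. The paper's own proof establishes $|u-\IT u|_{1,\Omega}\lesssim C\delta^{p}$ (its squared bound is $C\delta^{2p}$), which is what Theorem~\ref{T:error} uses; the $\delta^{d}$ in the statement of Theorem~\ref{T:est-sing} is evidently a typo, and since $\delta<1$ and $p$ may be smaller than $d$ you cannot in general deduce $\delta^{d}$ from $\delta^{p}$, so do not try to ``land on the stated form'' --- prove $\delta^{p}$.
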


\begin{proof}
Let $D_{\ell}= \bigcup \{ T \mid T \in \T \ \wedge \
2^{-\frac{\ell + 1}{d}} < \dist(x_i , T) \leq 2^{-\frac{\ell}{d}}
\} $ for $ 0 \leq \ell < d(K+1) $ and $ D_{d(K+1)}= \bigcup \{ T
\mid T \in \T \ \wedge \ \dist(x_i , T) \leq 2^{-(K+1)} \} $. Then
we obtain:
\begin{align}\notag
\seminorma{u-I_{\T} u}^2_{1, \Omega} &=  \sum_{T \in
\T}\seminorma{u - I_{\T} u}^2_{1,T}
\\
\label{main}
&= \sum_{\ell =0}^{d(K+1)-1} \sum_{T \subset D_{\ell}} \seminorma{u -
I_{\T} u}^2_{1,T} + \seminorma{u - I_{\T} u}^2_{1,D_{d(K+1)}} .
\end{align}
The second term in~(\ref{main}) can be bounded as follows:
\begin{align*}
 \seminorma{u - I_{\T} u}^2_{1, D_{d(K+1)}} &\leq
\seminorma{u}^2_{1, D_{d(K+1)}} + \seminorma{I_{\T} u}^2_{1, D_{d(K+1)}} \\
&=\seminorma{u}^2_{1, D_{d(K+1)}} 
+ \sum_{\substack{T \subset D_{d(K+1)} \\ x_i   \in  T} }\seminorma{I_{\T} u}^2_{1,T} 
+ \sum_{\substack{T \subset D_{d(K+1)} \\ x_i \notin T} }\seminorma{I_{\T} u}^2_{1,T}\\
&=: B_1 + B_2 + B_3
\end{align*}
From~\eqref{C-singamma} and lemma~\ref{dist}, we obtain:
\begin{align*}
B_1 &=\seminorma{u}^2_{1, D_{d(K+1)}} \leq \seminorma{u}^2_{1,
B(x_i,c2^{-(K+1)})} \leq 2\pi C \int_{0}^{c2^{-(K+1)}} r^{2(\gamma -
1)} r^{d-1} dr
\\
&= 2 \pi C \int_{0}^{c2^{-(K+1)}} r^{2 \gamma +d - 3} dr \simeq C 
2^{-(K+1) (2\gamma +d-2)}.
\end{align*}
For the term $B_2$ we use the fact that on a reference element $\hat T$, 
$\big|\widehat{\IT u}\big|_{1,\hat T} \lesssim \big\|\widehat{\IT u}\big\|_{L^\infty (\hat T)} = \big\|\IT u \big\|_{L^\infty (T)} $. By~\eqref{C-singamma}, if $ x_i \in T $, and $ T \subset D_{d(K+1)} $, $ \big\|\IT u \big\|_{L^\infty (T)} \leq C h_T^{\gamma} $. A proper scaling leads to
\begin{align*}
B_2 &= \sum_{\substack{T \subset D_{d(K+1)} \\ x_i \in T} }\seminorma{I_{\T} u}^2_{1,T} 
\approx \sum_{\substack{T \subset D_{d(K+1)} \\ x_i \in T} } h_T^{d-2} \big|\widehat{(\IT u)\vert_T}\big|_{1,\hat T}^{2} \lesssim C \sum_{\substack{T \subset D_{d(K+1)} \\ x_i \in T} }h_T^{2 \gamma + d -2} \\
&\leq \# \{ T \subset D_{d(K+1)} : x_i \in T \} \, |D_{d(K+1)}|^{\frac{2\gamma+d-2}{d}} . \\
\end{align*}
Since for these $T$'s, $r_T = 0$, lemma~\ref{size} leads to
\[
B_2 \lesssim  \# \{ T \subset D_{d(K+1)} : x_i \in T \} (2^{-d (K+1)})^{\frac{2\gamma+d-2}{d}} \lesssim 2^{-(K+1) (2 \gamma + d -2)},
\]
where we have used that the number of elements which have
$x_i$ as a vertex is bounded by a constant depending only on mesh regularity.

The term $B_3$ can be bounded using the fact that if $\dist(x_i,T) > 0$, then, by lemma~\ref{dist}, $ \dist(x_i,T) \simeq |x-x_i| $ $ \forall x \in T $ and thus~\eqref{C-singamma} yields
\begin{equation*}
|\nabla I_{\T} u(x)| \lesssim C \dist(x_i,T)^{\gamma -1} \lesssim
C |x-x_i|^{\gamma -1} \ \ \forall x \in T,
\end{equation*}
which implies that 
$
\int_T |\nabla I_{\T} u|^2 \lesssim C \int_T |x-x_i|^{2(\gamma - 1)} \,dx
$, 
and consequently
\begin{align*}
B_3 &= \sum_{\substack{T \subset D_{d(K+1)} \\ x_i \notin T }} \int_T | \nabla
I_{\T} u |^2 \lesssim C \int_{D_{d(K+1)}} |x-x_i|^{2(\gamma - 1)} dx \\
&\lesssim C \int_{0}^{c2^{-(K+1)}} r^{2 (\gamma - 1)} r^{d-1} \, dr \simeq
C 2^{-(K+1)(2\gamma +d-2)}.
\end{align*}
Combining the three estimates for $B_1$, $B_2$ and $B_3$ we obtain the following bound for the second term of~\eqref{main}:
\begin{equation}\label{second}
\seminorma{u - I_{\T} u}^2_{1,D_{d(K+1)}} \lesssim C  2^{-(K+1)(2\gamma +d-2)} \leq C \delta^{2p+d}.
\end{equation}
Using the usual estimates for the Lagrange interpolator and the fact that $ u\vert_T \in
H^{p+1}(T) $, $ \forall T \subset \Omega \backslash D_{d(K+1)} $ (see remark~\ref{R:main-result}), we can bound the first term of~(\ref{main})
by:	
\begin{equation}\label{first}
\sum_{\ell =0}^{d(K+1)-1} \sum_{T \subset D_{\ell}} \seminorma{u -
I_{\T} u}^2_{1,T} \lesssim \sum_{\ell =0}^{d(K+1)-1} \sum_{T \subset
D_{\ell}} h_T^{2p} \norma{D^{p+1}u}^{2}_{L^2(T)}.
\end{equation}
 Finally, by \eqref{C-singamma}, if $ x \in T $, $ |D^{p+1}u(x)| \leq C |x-x_i|^{\gamma-p-1} $, and thus $ \norma{D^{p+1}u}_{L^2(T)^{2}} \leq C \dist(x_i,T)^{2(\gamma-p-1)} h_T^d $, by  lemma
\ref{size}, $ h_T <  \delta 2^{\frac{2\ell
( \gamma - p -1 )}{d(2p+d)}} $ if $T \in D_{\ell}$, and again by lemma~\ref{dist}, we have that
\begin{align*}
\sum_{\ell =0}^{d(K+1)-1} \sum_{T \subset D_{\ell}} \seminorma{u - I_{\T} u}^2_{1,T} 
&\lesssim \sum_{\ell =0}^{d(K+1)-1} \sum_{T \subset D_{\ell}} h_T^{2p}
\norma{D^{p+1}u}^{2}_{L^2(T)} \\
&\lesssim  C \sum_{\ell =0}^{d(K+1)-1} \sum_{T \subset D_{\ell}}
\dist(x_i,T)^{2(\gamma -p-1)} h_T^{2p+d} \\
&\lesssim C \sum_{\ell =0}^{d(K+1)-1} \sum_{T \subset D_{\ell}} 2^{ -\frac{2 \ell
(\gamma - p -1)}{d}} \,  \delta^{2p+d} \, 2^{\frac{2 \ell (\gamma - p -1)}{d}}
\\
&\leq C \delta^{2p+d} \sum_{\ell=0}^{d(K+1)-1} \# D_{\ell}  
= C \delta^{2p+d} ( \# \T ) 
\end{align*}
Summing up, by~\eqref{main}, \eqref{second} and~\eqref{first}, and by lemma~\ref{complexity}
\begin{align*}
\seminorma{u-I_{\T}u}^2_{1, \Omega} 
&\lesssim C \delta^{2p+d} ( \# \T )  = C \delta^{2p+d} (
(\# \T - \# \T_0) + \# \T_0  ) \\
&\lesssim C \delta^{2p+d} (\delta^{-d} + \# \T_0 )\\
&\lesssim C \delta^{2p}
\lesssim C ( \# \T - \# \T_0 )^{-\frac{2p}{d}} ,
\end{align*}
where we have used that $\delta$ was chosen  so that $ \# \T_0 \leq \delta^{-d} $.
\end{proof}

\subsection{Proof of Main Result}\label{s:main-proof}

\begin{proof}[Proof of Theorem~\ref{T:error}]
Using the estimates of theorems~\ref{T:est-reg}
and~\ref{T:est-sing} we obtain:
\begin{align*}
\inf_{u_{\T} \in \V_{\T}} \norma{u-u_{\T}}_{1, \Omega} &\lesssim \inf_{u_{\T} \in \V_{\T}} \seminorma{u-u_{\T}}_{1,
\Omega} \lesssim C \seminorma{u-I_{\T}u}_{1,
\Omega} \\
&= C \seminorma{\sum_{i=0}^{n} ( u_i -
I_{\T}u_i)}_{1, \Omega} \\
&\leq C \sum_{i=0}^{N} \seminorma{( u_i - I_{\T}u_i)}_{1, \Omega}
\lesssim C N \delta^{p},
\end{align*}
and then, using lemma~\ref{complexity}, we have that
\begin{equation*}
\inf_{u_{\T} \in \V_{\T}} \norma{u-u_{\T}}_{1, \Omega} \lesssim
C N ( \# \T - \# \T_0 )^{-\frac{p}{d}}.
\end{equation*}
\end{proof}

\begin{proof}[Proof of Theorem~\ref{T:main-result}]
This is a corollary of theorem~\ref{T:error}. It is sufficient to
choose $\varepsilon = \mathbf{A}_1 \delta^{p}$. This implies the claim
for $ \varepsilon $ small enough, which immediately implies the
result for all $ \varepsilon > 0 $.
\end{proof}

\begin{remark} \textbf{Red-Green refinement}.
 Regarding the other well-known algorithm for adaptive mesh refinement in two dimensions, namely, the so called red-green refinement, the main result presented in this article is still open. However, the algorithm stated here can still be used for the construction of the quasi-optimal mesh, with obvious modifications due to the fact that a red subdivision splits the elements into four sub-elements instead of two. The only remaining issue that needs to be solved is to determine if a complexity result bounding the \emph{spreading} of refined elements, similar to theorem~\ref{T:bdd} holds.
\end{remark}





\subsection*{Acknowledgements} The authors want to thank Ricardo H.\ Nochetto for many interesting discussions and suggestions. They also want to express their gratitude to the anonymous referees, who, through their comments and suggestions helped us to substantially improve the manuscript.


\end{document}